\documentclass{article}
\usepackage[top=3.5cm,bottom=3.5cm,left=3cm,right=3cm,a4paper]{geometry}
\usepackage{amsfonts,amsmath,amssymb,amsthm,upgreek}
\usepackage{mathtools}
\usepackage{float,fancyhdr,siunitx}
\usepackage[dvipsnames]{xcolor}
\usepackage{setspace}
\usepackage{comment}
\usepackage{stmaryrd}
\usepackage{subcaption}

\usepackage{lineno}
\usepackage{empheq,mdframed}
\usepackage[dvipsnames]{xcolor}
\usepackage[normalem]{ulem} %% to cross text with \sout
\usepackage[colorlinks]{hyperref}
\hypersetup{
    colorlinks=true,
    linkcolor={blue!50!black},
    citecolor={red!80!black},
    urlcolor= {blue!50!black}
}
\usepackage{enumitem,mdframed}
\usepackage{empheq,longtable}
\usepackage[]{algorithm,algorithmic}
\usepackage{tikz,pgfplots}
\usepgfplotslibrary{groupplots}
\usetikzlibrary{positioning,arrows,fit,patterns}
\pgfplotsset{compat=newest}

\newtheorem{remark}{Remark}
\newtheorem{assumption}{Assumption}
\newtheorem{notation}{Notation}
\newtheorem{definition}{Definition}
\newtheorem{proposition}{Proposition}

\usepackage[capitalize,nameinlink]{cleveref}
\crefname{section}   {Section}   {Sections}
\crefname{subsection}{Subsection}{Subsections}
\Crefname{section}   {Section}   {Sections}
\Crefname{subsection}{Subsection}{Subsections}
\Crefname{figure}    {Figure}    {Figures}

\crefdefaultlabelformat{#2\textup{#1}#3}
\crefname{equation}{}{}
\Crefname{equation}{}{}
\crefname{proposition}{Proposition}{Propositions}
\Crefname{proposition}{Proposition}{Propositions}
\crefname{definition} {Definition} {Definitions}
\Crefname{definition} {Definition} {Definitions}
\crefname{theorem}    {Theorem}    {Theorems}
\Crefname{theorem}    {Theorem}    {Theorems}
\crefname{remark}     {Remark}     {Remarks}
\Crefname{remark}     {Remark}     {Remarks}
\crefname{assumption} {Assumption} {Assumptions}
\Crefname{assumption} {Assumption} {Assumptions}
\crefname{claim} {Claim} {Claims}
\Crefname{claim} {Claim} {Claims}

\makeatletter
\renewcommand*\env@matrix[1][\arraystretch]{%
  \edef\arraystretch{#1}%
  \hskip -\arraycolsep
  \let\@ifnextchar\new@ifnextchar
  \array{*\c@MaxMatrixCols c}}
\makeatother
\newcommand*\samethanks[1][\value{footnote}]{\footnotemark[#1]}

\newcommand{\ii}{\mathrm{i}}
\newcommand{\bx}{\mathbf{x}}
\newcommand{\by}{\mathbf{y}}
\newcommand{\rmd}{\mathrm{d}}

\newcommand{\bv}{{\mathbf{v}}}

\newcommand{\bn}{\mathbf{n}}
\newcommand{\bd}{\mathbf{d}}

\newcommand{\bm}{\mathbf{m}}

\newcommand{\Cov}{\mathbf{Cov}}

\newcommand{\srcs}{\Sigma_\text{src}}
\newcommand{\rcvs}{\Sigma_\text{rcv}}
\newcommand{\brmU}{\boldsymbol{\mathrm{U}}}
\newcommand{\brmG}{\boldsymbol{\mathrm{G}}}
\newcommand{\brmC}{\boldsymbol{\mathrm{C}}}
\newcommand{\brmS}{\boldsymbol{\mathrm{S}}}

\newcommand{\dof}{\mathrm{dof}}
\newcommand{\ndof}{n^\dof}
\newcommand{\comp}{\mathrm{comp}}

\newcommand{\brmV}{\boldsymbol{\mathrm{V}}}

\newcommand{\ncomp}{n_{\mathrm{comp}}}
\renewcommand{\dim}{\mathrm{d}}

\title{Full cross-correlation inversion for quantitative passive imaging with time-harmonic acoustic waves}

\author{Jean Dutheil\thanks{Project-Team Makutu, Inria Bordeaux, University of Pau and Pays de l’Adour, Pau, France.} \, and Florian Faucher\samethanks[1]}

\date{\today}

\begin{document}

\maketitle

\numberwithin{equation}{section}

% --------------------------------------
\begin{abstract} 

We consider the inverse problem for the quantitative reconstruction of 
physical properties in the context of passive imaging, where ambient 
wavefields are used to infer a medium. The data are 
modeled as a superposition of waves generated by stochastic sources. 
In this work, we focus on time-harmonic acoustic wave propagation and 
assume that the stochastic sources exciting the medium are zero-mean 
and spatially uncorrelated. Under these assumptions, the expected value 
of the cross-correlation between signals recorded at two locations can 
be related to the deterministic Green's function and the covariance of 
the source terms. We follow a first-order formulation of the wave equation, 
which enables the treatment of correlations between different types of 
wavefields. A numerical framework is developed for the resulting nonlinear 
inverse problem. The quantitative reconstruction is carried out using an 
iterative minimization scheme, in which the gradient of the misfit functional 
is computed via the adjoint-state method. Numerical experiments in 
two and three dimensions are performed using synthetic data, and inversions 
based on the expected value of cross-correlations are compared with those 
relying on direct wavefield measurements from active-source acquisitions.

\end{abstract}
\tableofcontents
% --------------------------------------------------------------------------
\section{Introduction}
\label{section:intro}
% --------------------------------------------------------------------------

In passive imaging, background ambient signals are exploited to probe a 
medium of interest. 
Passive imaging is in particular used in asteroseismology and planetary 
seismology, where continuous vibrations of the bodies are recorded, 
for example in helioseismology \cite{Gizon2005} and terrestrial 
seismology \cite{nakata2019}. 
For instance, in helioseismology, the observed solar oscillations are 
stochastically driven by turbulent convective motions 
\cite{Dalsgaard2014,Gizon2005}, and the resulting signals are used to probe 
the solar interior \cite{Gizon2005,Pham2020Siam,Pham2025axisymreport,muller2024}.
On Earth, ambient noise can result from microseismic activity 
(and other natural or anthropogenic sources) \cite{nakata2019}. 
Ambient signals provide vast amounts of data because they only require 
sensors to monitor background vibrations. Consequently, passive imaging 
has seen a growing number of applications, including regional-scale 
imaging \cite{Shapiro2004,Shapiro2005}, as well as monitoring of glaciers 
\cite{Walter2020}, groundwater systems \cite{Voisin2017}, volcanoes 
\cite{Takano2020}, and applications in medical imaging \cite{douglas2017}.
In our work, we consider the ambient signals as superpositions of 
wavefields generated by stochastic sources.

In contrast to active-source imaging, which relies on 
dedicated excitation devices 
to probe the medium, passive imaging requires only sensors to record 
background vibrations. In addition, controlled-source acquisitions, while a 
standard practice in Earth exploration (e.g., \cite{Virieux2009}), are not 
feasible in asteroseismology. Nevertheless, the stochastic nature of 
ambient signals introduces additional complexity in data interpretation, 
and a central challenge is to relate these observations to deterministic 
objects.
To this end, the cross-correlation of ambient signals has become a 
fundamental tool. Early work by Aki \cite{aki1957} explored the use of 
autocorrelation techniques in seismology to investigate Earth's structure. 
In helioseismology, Duvall et al. \cite{duvall1993} demonstrated that the 
cross-correlation of random wavefields can be used to extract travel-time 
information. Weaver and Lobkis \cite{weaver2001} further established the 
connection between correlations of diffuse fields and the Green's function 
of the medium. 
In particular, cross-correlations exhibit a self-averaging property: under 
appropriate assumptions, long-time averages converge toward their expected 
value, enabling the extraction of deterministic information from stochastic 
recordings \cite{garnier2016}. 

This connection between the expected value of the cross-correlation of stochastic signals and the deterministic Green's function constitutes a cornerstone for interpreting ambient signals \cite{garnier2016,nakata2019}, in particular by 
enabling the extraction of the travel times.
% cf. applications in Earth imaging \cite{Shapiro2004,Shapiro2005,Gouedard2008,fichtner2016,fichtner2017} 
% and in helioseismology \cite{duvall1993,Gizon2005,muller2024}. 
For instance, in \cite{campillo2003}, empirical Green's functions 
are estimated from seismic-noise cross-correlations to retrieve 
wave travel times between seismic stations and subsequently perform
seismic tomography. 
This approach relies on the assumption that stochastic sources are 
uniformly distributed, which may not always hold \cite{curtis2008}. 
The extraction of travel times is also performed 
in \cite{garnier2009} to image 
reflectors in random media. Moreover, asymmetries in cross-correlations 
have been exploited to image the spatial distribution of ambient 
seismic sources in \cite{fichtner2016}.

In our work, we develop a quantitative reconstruction 
methodology for passive imaging based on the expected 
value of the cross-correlation, using the full wavefields 
without reducing to travel times. 
We consider the time-harmonic acoustic wave equation and 
model ambient noise as the superposition of waves generated 
by randomly distributed sources; that is, the randomness is 
introduced only through the source terms. 
We further assume that the random sources are spatially 
uncorrelated, \cite{garnier2016,nakata2019}.
Under this assumption, the expected value of the cross-correlation 
can be expressed as the solution of a deterministic wave equation 
whose right-hand side depends on the Green's function and the 
source covariance, e.g., \cite{fichtner2017}. 
We further ignore the `convenient source' assumption that is
used to further simplify the cross-correlation average so 
that it is proportional to the imaginary part of the Green's 
function (as used in, e.g., \cite{garnier2016,Pham2020Siam}).
This assumption is shown to be inappropriate in applications, cf.~\cite{curtis2008,fichtner2014}.
An approach close to ours is considered in \cite{fichtner2025}, 
which, however, relies on approximation of the Green's function, 
reduced to a single arrival with a given amplitude and travel time, 
rather than the full solution of
the wave equation. 
Cross-correlations are also employed in \cite{muller2024} but in the context of iterative holography which is a migration-type imaging approach based on the 
backpropagation of the data, and that does not necessarily update the Green's function.
Furthermore, \cite{muller2024} either uses closed-form Green's functions 
(with homogeneous backgrounds) or exploits spherical-harmonic decompositions 
for radially symmetric backgrounds to reduce the computational cost.

In our work, we instead compute the Green's function through a general numerical
discretization that applies to arbitrarily heterogeneous media, without relying on simplifying assumptions. We then consider the
resulting nonlinear inverse problem. We further extend this framework by considering the
first-order wave equations, which enables the computation of cross-correlations between
distinct fields, such as pressure and velocity components.

The quantitative inverse problem is formulated as the 
reconstruction of the model parameters given the expected values 
of the cross-correlations. The unknown parameters consist 
of the medium properties, namely the density and the wave 
speed for acoustic propagation, 
and the covariance of the stochastic sources. 
For the reconstruction, we adopt a minimization-based approach 
similar to full waveform inversion (FWI) \cite{Virieux2009,tarantola1984}, 
but using cross-correlations instead of wavefields. 
Starting from an initial set of parameters, a misfit function 
measuring the discrepancy between simulated and observed data 
is minimized iteratively. The procedure relies on a Newton-type 
method, in which the derivatives of the misfit functional are 
computed via the adjoint-state method 
\cite{chavent1974,plessix2006,Chavent2010,kern2016,Faucher2020}, 
which must now be adapted to the cross-correlation data.
In particular, there are now two adjoint states to be resolved, 
contrary to only one in FWI.

The numerical discretization for the forward and inverse problems is carried out 
using the Hybridizable Discontinuous Galerkin (HDG) method 
\cite{Cockburn2009,cockburn2010hybridizable,cockburn2016static,cockburn2023hybridizable}, 
following the implementation of \cite{Faucher2020} in the open-source software 
\texttt{Hawen}~\cite{Hawen2021}. The HDG framework is particularly well-suited for
first-order problems, as it provides direct access to both scalar and vector field 
quantities without enlarging the global linear system, and naturally supports 
$hp$-adaptivity and parallelisation. The adjoint-state formulations are derived 
consistently with both the HDG discretization and the structure of the cross-correlation 
problem. Linear systems are solved using the direct solver MUMPS \cite{Amestoy2001}, 
which efficiently handles multiple right-hand sides once the factorization is performed \cite{amlm:18a}. This is particularly convenient for the computations of the expected value of cross-correlations that involve the same operator with various right-hand sides.

In this paper, we first introduce the forward modeling problem in \cref{section:equations}, where we derive the expected value of the cross-correlation and show how it can be computed from the Green's functions.
In \cref{section:hdg-discretization}, we present the HDG discretization of the wave equation used to compute the expected cross-correlation numerically. 
% By working with the first-order system, we further detail the steps to compute the correlations between different fields.
The nonlinear inverse problem is formulated in \cref{section:inversion}, where we provide the framework and derive the adjoint-state method to compute the gradient of the misfit function adapted to the cross-correlation problem, while accounting for the HDG discretization.
Finally, in \cref{section:numerical}, we present numerical experiments that illustrate the method's feasibility and its performance compared to the more traditional case of controlled-source acquisition. 
 
% --------------------------------------------------------------------------
\section{Time-harmonic cross-correlation of acoustic waves}
\label{section:equations}
% --------------------------------------------------------------------------

In this section, we express the expected value of the cross-correlation 
between fields for the time-harmonic acoustic wave equation. 
Assuming the sources are uncorrelated in space, the expected value of 
the cross-correlation is computed using two consecutive resolutions of the 
wave problem, first with a Dirac source to obtain the Green's tensor, 
and then with a dense right-hand side. It gives access to the expected value 
of the cross-correlation between the position of the Dirac source and any 
other positions in the domain.

% ---------------------------------------------
\subsection{First-order wave equations}
% ----------------------------------------------

We consider the propagation of acoustic waves in a bounded 
domain in dimension $\dim$ (typically two or three), 
$\Omega \subset \mathbb{R}^\dim$.
The time-harmonic waves are given by the 
scalar pressure $p \in \mathbb{C}(\Omega)$ and the 
vectorial particle velocity $\bv\in \mathbb{C}(\Omega)^\dim$
that are solutions to (e.g., \cite{ColtonKress,faucher2017,Faucher2020}),
\begin{subequations} \label{eq:direct:wave_equation}
    \begin{empheq}[left=\empheqlbrace]{align}    
    &  \dfrac{\ii \omega}{\kappa(\bx,\omega)}\, p(\bx) - \nabla \cdot \bv(\bx) = f(\bx),\\
    &-\ii \omega \rho(\bx)\, \bv(\bx) + \nabla p(\bx) = \mathbf{0}.
    \label{eq:direct:wave_equation_b}
    \end{empheq}
\end{subequations}
Here, $\omega \in \mathbb{R}^+$ denotes the angular frequency, and
the scalar source term is denoted $f$.
The medium is characterized by the bulk modulus $\kappa$, 
and the density $\rho$. We allow for complex-valued, 
frequency-dependent $\kappa$ to account for attenuation, and 
further refer to \cite{Faucher2023viscoacoustic} for more 
details on viscoacoustic models.
As an alternative model parameter, the wave speed 
$c(\bx,\omega)$ is defined such that
\begin{equation} \label{eq:wavespeed}
   c(\bx,\omega) \,=\, \sqrt{\dfrac{\kappa(\bx,\omega)}{\rho(\bx)}} \,,
\end{equation}
where $\sqrt{\cdot}$ selects the complex upper half-plane, 
e.g., \cite{Elbau2017,Faucher2023viscoacoustic}.

\newcommand{\bcdir}{\Gamma_{\mathrm{d}}}
\newcommand{\bcabc}{\Gamma_{\mathrm{abc}}}
We assume that the boundary $\Gamma$ of the domain 
$\Omega$ is decomposed in two disjoint parts: 
$\bcdir$ and $\bcabc$. 
On $\bcdir$, we impose homogeneous Dirichlet condition 
in pressure (i.e., free-surface condition) and 
first-order absorbing condition (\cite{engquist1977absorbing})
is imposed on $\bcabc$: 
\begin{subequations}\begin{align}
&
\text{on $\bcdir$, free-surface condition: } \hspace*{5em}
p \,=\, 0 \,; \\
& \text{on $\bcabc$, absorbing condition: } \quad
-\ii\,\omega\,p / c \,+\, \, \partial_\bn\, p \,=\,0 \,\, \Rightarrow \,\,
c\,\rho \,\, \bv \cdot \bn \, - \, p \,\, = 0 \,.
\end{align} \end{subequations}
Here, $\bn$ denotes the normal direction and $\partial_\bn$ the normal derivative. This configuration is in particular 
representative of Earth media, \cite{faucher2017}.

We introduce the notation $\mathcal{L}$ 
such that \cref{eq:direct:wave_equation}
is written as,
\begin{equation}\label{eq:direct:wave_equation_compact}
    \mathcal{L}\,\mathcal{U}   \,=\, \mathcal{F}, 
    \qquad \text{with} \quad
    \mathcal{U} = 
    \begin{pmatrix}[1.25]
        p \\ \bv
    \end{pmatrix} \,,
    \quad
    \mathcal{F} =  
    \begin{pmatrix}[1.25]
    f \\ \mathbf{0}
    \end{pmatrix}
    \quad \text{and}\quad
% \end{equation}
% with the block operator
% \begin{equation}
    \mathcal{L} = 
    \begin{pmatrix}[1.25]
        \dfrac{\ii \omega}{\kappa} & 
        - \nabla \cdot \\[6pt]
       \nabla & -\ii \omega \rho
    \end{pmatrix}.
\end{equation}
Furthermore, we use the notation `$\comp$' 
to denote the set of components:
\begin{equation}
    \comp \,:=\, \{p, v_x, v_y, v_z \} \,\,\text{ in 3D,} \qquad
    \comp \,:=\, \{p, v_x, v_y\} \,\,\text{ in 2D.}
\end{equation}
Its cardinality is denoted by $\ncomp := \dim + 1$.

% ---------------------------------------------------------
\begin{definition}[Green's kernel] 
\label{definition:green_pressure}
The Green's kernel $\mathsf{G}$ is 
defined as the fundamental solution 
to
\begin{equation}
    \mathcal{L} \mathsf{G}( \bx, \by) = \begin{pmatrix} \delta_\by(\bx) \\ \mathbf{0}
    \end{pmatrix}, \qquad \mathsf{G} = \left ( \mathcal{G}^{[i]} \right )_{i \in \comp} \,,
\end{equation}
with the same boundary conditions as \cref{eq:direct:wave_equation} and 
where $\delta_\by$ is the Dirac distribution at position $\by$.
\end{definition}
Note that from \cref{eq:direct:wave_equation_b}, we have that,
\begin{equation}
\mathcal{G}^{[v_i]}(\bx,\by)
=
\frac{1}{\ii \omega \rho(\bx)}\,\partial_{i}\mathcal{G}^{[p]}(\bx,\by),
\qquad
\end{equation}
where $\partial_i$ is the partial derivative with 
respect to the $i^\mathrm{th}$ coordinate.

\begin{proposition}[Reciprocity]\label{prop:reciprocity}
  For the acoustic wave system \cref{eq:direct:wave_equation}, 
  the component of the Green's kernel associated with the 
  pressure field, 
  $\mathcal{G}^{[p]}$, is symmetric with respect to 
  $(\bx,\by)$ such that,
  \begin{equation} \label{eq:green-reciprocity}
  \forall\, (\bx, \by) \in \Omega^2,\qquad 
  \mathcal{G}^{[p]}(\bx, \by) = \mathcal{G}^{[p]}(\by,\bx) \,.
  \end{equation}
\end{proposition}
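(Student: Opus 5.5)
Our plan is to reduce the first-order system to a scalar second-order equation for the pressure component and then apply the standard Green's second identity argument, using that both the operator and the boundary conditions are symmetric (not Hermitian, since $\kappa$ may be complex). Write $G_\by(\bx) := \mathcal{G}^{[p]}(\bx,\by)$. Eliminating $\bv$ through \cref{eq:direct:wave_equation_b}, i.e.\ $\bv = \nabla p/(\ii\omega\rho)$, the first equation becomes
\begin{equation*}
  \dfrac{\ii\omega}{\kappa}\,G_\by \,-\, \nabla\cdot\Big(\dfrac{1}{\ii\omega\rho}\nabla G_\by\Big) \,=\, \delta_\by ,
  \qquad\text{equivalently}\qquad
  -\nabla\cdot\Big(\dfrac{1}{\rho}\nabla G_\by\Big) - \dfrac{\omega^2}{\kappa} G_\by \,=\, \ii\omega\,\delta_\by ,
\end{equation*}
with $G_\by = 0$ on $\bcdir$ and, on $\bcabc$, $\partial_\bn G_\by = \ii\omega G_\by / c$, which follows from the condition $c\rho\,\bv\cdot\bn = p$.

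Next, fix $\by,\by'\in\Omega$, multiply the equation for $G_\by$ by $G_{\by'}$, the equation for $G_{\by'}$ by $G_\by$ (no complex conjugation), subtract, and integrate over $\Omega$. The zeroth-order terms $\omega^2 G_\by G_{\by'}/\kappa$ cancel identically even for complex $\kappa$. The divergence terms give, by Green's second identity, the boundary integral
\begin{equation*}
  \int_{\Gamma} \dfrac{1}{\rho}\Big( G_{\by'}\,\partial_\bn G_\by - G_\by\,\partial_\bn G_{\by'}\Big)\, \rmd s .
\end{equation*}
On $\bcdir$ both functions vanish. On $\bcabc$, substituting the absorbing condition turns the integrand into $\frac{\ii\omega}{\rho c}\,(G_{\by'}G_\by - G_\by G_{\by'}) = 0$; here it matters that the impedance coefficient $\ii\omega/c$ is the same multiplicative factor for both fields and that no conjugates appear. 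The right-hand sides give $\ii\omega\,(G_\by(\by') - G_{\by'}(\by))$, so $G_\by(\by') = G_{\by'}(\by)$, which is \cref{eq:green-reciprocity}.

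The main obstacle is rigor at the singular points: $G_\by$ is not regular near $\by$, so Green's identity cannot be applied directly with Dirac sources. I would handle this in the usual way, by excising small balls $B_\varepsilon(\by)$ and $B_\varepsilon(\by')$ and using the local behaviour of the fundamental solution (logarithmic in 2D, $1/|\bx|$ in 3D, with $\rho$ assumed smooth near the points) to show that the sphere integrals tend to the point values as $\varepsilon\to0$. Alternatively, I would argue weakly: the sesquilinear-free bilinear form $a(u,w)=\int_\Omega \rho^{-1}\nabla u\cdot\nabla w - \omega^2\kappa^{-1}uw \,-\, \int_{\bcabc}\ii\omega(\rho c)^{-1}uw$ is symmetric, so the solution operator is symmetric as well. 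One then passes to Dirac data by density, or by mollified sources $\delta^\eta_\by$ and a limit $\eta\to0$, assuming well-posedness of the forward problem.
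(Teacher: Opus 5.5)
The paper gives no proof of this proposition. Reciprocity is asserted as a classical property and is only used afterwards, to swap the arguments of $\mathcal{G}^{[p]}$ in the derivation of \cref{eq:CC_main}. Your argument is the standard justification, and it is correct.

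The points that matter are the ones you identify:
\begin{itemize}
\item eliminating $\bv$ gives the operator $-\nabla\cdot(\rho^{-1}\nabla\,\cdot\,)-\omega^2\kappa^{-1}$, which is symmetric for the bilinear pairing;
\item the absorbing condition becomes $\partial_\bn p=\ii\omega p/c$, with the same coefficient for both fields;
\item working without complex conjugation is exactly what makes a complex $\kappa$ and $c$ harmless.
\end{itemize}
There is one harmless sign slip. With your order of subtraction, the source terms give $\ii\omega\,(G_{\by'}(\by)-G_{\by}(\by'))$, not the opposite sign, which changes nothing.

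Of your two ways of handling the singular sources, the weak one fits this paper better. The excision argument needs $\rho$ smooth near $\by$ and $\by'$, whereas the densities used in the experiments (e.g.\ Overthrust) are only piecewise smooth.

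With $T$ the solution operator, symmetry of $a(\cdot,\cdot)$ gives $\int_\Omega f\,Tg=\int_\Omega g\,Tf$ for all $f,g\in L^2(\Omega)$. By \cref{prop:representation}, this is symmetry of $\mathcal{G}^{[p]}$ almost everywhere on $\Omega^2$. The pointwise identity for $\bx\neq\by$ then follows from continuity of $\mathcal{G}^{[p]}$ away from the diagonal. On the diagonal the kernel is singular, so the ``for all $(\bx,\by)\in\Omega^2$'' in the statement is only meaningful off it.

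The well-posedness you assume is implicit in the paper as well. For real $\kappa,\rho$ and $\bcabc$ of positive measure, it follows from the imaginary part of $a(p,\overline{p})$ together with unique continuation.
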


\begin{proposition}[Representation] \label{prop:representation}
The solution $(p, \bv)$ of \cref{eq:direct:wave_equation} for 
right-hand side $f$ can be represented with the Green's kernel
such that,
\begin{equation}
    p(\bx) = \int_\Omega \mathcal{G}^{[p]}(\bx, \by) f (\by) \rmd \by = \int_\Omega \mathcal{G}^{[p]}(\by, \bx) f (\by) \rmd \by \,,
\end{equation}
\begin{equation}
    v_i(\bx) = \dfrac{1}{\ii \omega \rho(\bx)} \partial_{i} p(\bx) =\dfrac{1}{\ii \omega \rho(\bx)} \int_\Omega \partial_{i} \mathcal{G}^{[p]}(\bx, \by) f(\by) \rmd \by = \int_{\Omega} \mathcal{G}^{[v_i]} (\bx, \by) f(\by) \rmd \by.
\end{equation}
In the following, the representation for 
the vector solution $\mathcal{U}=(p \,\,\, \bv)^t$ is written as, 
\begin{equation} \label{eq:Green_rep}
    \mathcal{U} = \int_\Omega \mathsf{G}(\bx, \by) f(\by) \rmd \by.
\end{equation}
\end{proposition}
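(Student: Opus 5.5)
The representation follows from the linearity of $\mathcal{L}$ and the defining property of the Green's kernel in \cref{definition:green_pressure}. First I treat the pressure. Define the candidate
\[
\tilde p(\bx) := \int_\Omega \mathcal{G}^{[p]}(\bx,\by) f(\by)\,\rmd\by, \qquad \tilde\bv(\bx) := \int_\Omega \mathcal{G}^{[\bv]}(\bx,\by) f(\by)\,\rmd\by .
\]
Since $\mathcal{L}$ acts in the variable $\bx$ and is linear, I pass it under the integral, in the sense of distributions (pairing against test functions and using Fubini). This gives
\[
\mathcal{L}\begin{pmatrix}\tilde p\\ \tilde\bv\end{pmatrix}(\bx) = \int_\Omega \begin{pmatrix}\delta_\by(\bx)\\ \mathbf 0\end{pmatrix} f(\by)\,\rmd\by = \begin{pmatrix} f(\bx)\\ \mathbf 0\end{pmatrix}.
\]
The boundary conditions are linear and homogeneous, and they hold for $\mathsf{G}(\cdot,\by)$ for every $\by$. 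Hence they also hold for the superposition $(\tilde p,\tilde\bv)$. Provided the problem \cref{eq:direct:wave_equation} with its boundary conditions is well posed, which I assume as is implicit in the definition of the Green's kernel, uniqueness gives $(p,\bv)=(\tilde p,\tilde \bv)$. This establishes the first equality for $p$ and, componentwise, the compact form \cref{eq:Green_rep}.

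The second equality for $p$ is immediate from \cref{prop:reciprocity}. Substituting $\mathcal{G}^{[p]}(\bx,\by)=\mathcal{G}^{[p]}(\by,\bx)$ in the integrand gives it directly.

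For the velocity, I use the second equation \cref{eq:direct:wave_equation_b}, which gives $v_i = \partial_i p/(\ii\omega\rho)$. Differentiating the representation of $p$ under the integral sign gives the middle expression. The factor $1/(\ii\omega\rho(\bx))$ depends only on $\bx$, so it moves inside the integral. By the identity $\mathcal{G}^{[v_i]}=\partial_i\mathcal{G}^{[p]}/(\ii\omega\rho)$ noted after \cref{definition:green_pressure}, this is exactly $\int_\Omega \mathcal{G}^{[v_i]}(\bx,\by)f(\by)\,\rmd\by$. This is consistent with $\tilde\bv$ above.

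The main obstacle is the justification of exchanging $\mathcal{L}$ (and $\partial_i$) with the integral, because $\mathsf{G}$ is singular at $\bx=\by$. I would handle it distributionally. Pair against a smooth test function and use Fubini, assuming $f$ is, say, in $L^2(\Omega)$ and that $\mathcal{G}^{[p]}(\cdot,\by)$ is locally integrable with integrable gradient. This holds since its singularity is logarithmic in 2D and like $1/|\bx-\by|$ in 3D. Alternatively, I could approximate $f$ by smooth functions and pass to the limit using the stability of the well-posed problem.
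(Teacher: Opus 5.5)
Your proof is correct. It follows the same chain of reasoning that the paper packs into the statement itself (the paper gives no separate proof): superposition of the Dirac responses from \cref{definition:green_pressure} for the first equality, \cref{prop:reciprocity} for the second, and \cref{eq:direct:wave_equation_b} together with the identity $\mathcal{G}^{[v_i]} = \partial_i \mathcal{G}^{[p]}/(\ii\omega\rho)$ for the velocity. Your remarks on well-posedness (needed for uniqueness) and on exchanging $\mathcal{L}$ and $\partial_i$ with the integral, via local integrability of the kernel and its gradient, go beyond what the paper addresses and are adequate at its level of rigor.
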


% --------------------------------------------------
\subsection{Cross-correlation of ambient signals}
% --------------------------------------------------

% =====================================================
\subsubsection{Source covariance}
% =====================================================

We consider that the ambient signals corresponds 
to a superposition of waves excited from 
stochastic sources. Keeping the notation $f$ for 
the random variable, the source is described by its 
covariance such that,

\begin{equation}
\forall\, \psi,\varphi \in L^2(\Omega),
\qquad
\langle \Cov[f]\psi,\varphi \rangle
=
\mathbb{E}\!\left[
\langle f,\psi\rangle_{L^2(\Omega)}\,
\overline{\langle f,\varphi\rangle_{L^2(\Omega)}}
\right].
\end{equation}
The following assumption on the source covariance is a key-ingredient 
for further derivations; It is employed, for instance in~\cite{garnier2016},
in applications of Earth seismology \cite{fichtner2016} and 
helioseismology \cite{Pham2020Siam}. 
\begin{assumption} \label{src_assumption}
We assume that the source $f$ is spatially uncorrelated, such that the covariance operator $\Cov[f]$ is the multiplication operator by the function $\mathcal{S}\in L^\infty(\Omega)$.
\end{assumption}

In the present work, $\mathcal{S}$ is referred to as the \emph{source covariance}.
Since we consider a single scalar source field $f$, $\mathcal{S}$ is real-valued 
and non-negative, and coincides with the local variance density of $f$. In the seismological literature, it is also commonly referred to as the 
\emph{source power-spectral density} (see, e.g., \cite{fichtner2016}).

The present framework naturally extends to vector-valued source fields appearing 
on the right-hand side of \cref{eq:direct:wave_equation_b}. In that case, the 
full source covariance operator is matrix-valued: its diagonal entries 
$\mathcal{S}_{ii}$ correspond to the variance densities of the individual source 
components, while the off-diagonal entries $\mathcal{S}_{ij}$, $i \neq j$, 
describe the cross-covariances between distinct source components and may, in 
general, be complex-valued. In this setting, $\mathcal{S}$ coincides with the 
pressure-pressure entry $\mathcal{S}_{pp}$ of the full source covariance operator. 
Throughout this work, we restrict ourselves to the scalar case, and $\mathcal{S} 
= \mathcal{S}_{pp}$ always denotes this single variance density.

Note that under \cref{src_assumption}, since $\Omega$ is bounded,
$\mathcal{S}\in L^\infty(\Omega)$ also implies $\mathcal{S}\in L^2(\Omega)$.
The covariance operator admits the distributional kernel representation
\[
(\mathbf{x}_1,\mathbf{x}_2)\mapsto
\mathbb{E}[f(\mathbf{x}_1)\overline{f(\mathbf{x}_2)}]
=
\mathcal{S}(\mathbf{x}_1)\delta(\mathbf{x}_1-\mathbf{x}_2).
\]
Indeed, for all $\psi,\varphi\in L^2(\Omega)$,
\begin{equation}\label{eq:src_assumption}
\int_{\Omega}\int_{\Omega}
\mathbb{E}[f(\mathbf{y}_1)\overline{f(\mathbf{y}_2)}]
\psi(\mathbf{y}_1)\overline{\varphi(\mathbf{y}_2)}
\,d\mathbf{y}_1d\mathbf{y}_2
=
\int_{\Omega}
\mathcal{S}(\mathbf{y})
\psi(\mathbf{y})\overline{\varphi(\mathbf{y})}
\,d\mathbf{y}.
\end{equation}

% =====================================================
\subsubsection{Expected value of the cross-correlation}
% =====================================================

The expected value of the cross-correlation between 
two time-harmonic signals $u_1$ and $u_2$ measured 
respectively in $\bx_1$ and $\bx_2$ at frequency 
$\omega$ is defined as 
$\mathcal{C}_{u_1 u_2}(\bx_1, \bx_2, \omega)$ such that,
\begin{equation}
    \mathcal{C}^{[u_1 u_2]}(\bx_1, \bx_2, \omega)
    = \mathbb{E}\!\left[\,u_1(\bx_1, \omega)\,\overline{u_2(\bx_2, \omega)}\,\right].
\end{equation}
Consider that the signals are solutions to the wave equations \cref{eq:direct:wave_equation_compact}, 
we define the matrix $\mathsf{C}$ of the expected value of the cross-correlation such that,
omitting the frequency dependency for clarity:
\begin{equation} \label{eq:CC_def}
    \mathsf{C}(\mathbf{x}_1, \mathbf{x}_2)
    =
    \left( \mathcal{C}^{[ij]}(\bx_1, \bx_2) \right)_{(i,j) \in \comp^2} 
    \;=\;
    \mathbb{E}\!\left[\,\mathcal{U} (\mathbf{x}_1)\,
    \otimes \overline{{\mathcal{U}(\bx_2)}}
    \right].
\end{equation}

% This quantity represents the expected cross-correlation between the $j$-th component of the wavefield at $\mathbf{x}_1$ and the measured pressure field at $\mathbf{x}_2$.
Using \cref{prop:representation}, we have that:
\begin{equation}
    \mathsf{C}(\mathbf{x}_1, \mathbf{x}_2)
    = \int_{\by_1} \int_{\by_2} \mathbb{E}\!\left[\, f(\by_1)
    \,\, \overline{f(\by_2)} \right ] \mathsf{G}(\bx_1, \by_1) \otimes
     \overline{ \mathsf{G}(\bx_2, \by_2) }\rmd {\by_1} \rmd{\by_2},
\end{equation}
and with \cref{eq:src_assumption}, this gives,

\begin{equation}
    \mathsf{C}(\mathbf{x}_1, \mathbf{x}_2)
    = \int_\by \mathcal{S}(\by)\mathsf{G}(\bx_1, \by)
   \otimes \mathsf{G}(\bx_2, \by) \rmd \by \,.
\end{equation}

To further simplify this quantity, one can use 
the reciprocity property of the Green's function
\cref{prop:reciprocity}. 
Assuming one of the correlated field is the pressure 
field, we denote by $\mathsf{C}^{[p]}$ the resulting vector quantity, that corresponds to the first 
column of $\mathsf{C}$:
\begin{equation}
    \mathsf{C}^{[p]}(\mathbf{x}_1, \mathbf{x}_2)
    = \int_\by \mathsf{G}(\bx_1, \by)
    \mathcal{S}(\by) \overline{\mathcal{G}^{[p]}( \bx_2, \by)} \rmd \by \,.
\end{equation}
Using the reciprocity from \cref{prop:reciprocity}, we get,
\begin{equation}
    \mathsf{C}^{[p]}(\mathbf{x}_1, \mathbf{x}_2)
    = \int_\by \mathsf{G}(\bx_1, \by)
    \mathcal{S}(\by) \overline{\mathcal{G}^{[p]}( \by, \bx_2)} \rmd \by\,.
\end{equation}
that can be written as
\begin{equation}\label{eq:CC_main}
    \mathsf{C}^{[p]}(\mathbf{x}_1, \mathbf{x}_2) = \int_\Omega \mathsf{G}(\bx_1, \by) \mathcal{M}(\by,\bx_2) \rmd \by \,, \hspace*{1em}
    \text{with } \quad
    \mathcal{M}(\by,\bx_2) \,=\, \mathcal{S}(\by) \, 
    \overline{\mathcal{G}^{[p]}(\by,\bx_2)}.
\end{equation}
We recognize an expression similar to the 
representation formula \cref{eq:Green_rep}.
Therefore,  
$\mathsf{C}(\bx_1, \bx_2)$ can be computed 
as the solution to the forward problem \cref{eq:direct:wave_equation} 
with source term $\mathcal{M}$.
This prevents us from having to formally compute the integration, however, it requires the reciprocity of the forward operator
Green's function \cref{eq:green-reciprocity} to compute $\mathcal{G}^{[p]}(\bx_2, \by)$ only once, with the Dirac source in $\bx_2$.

\begin{remark}[Convenient source assumption]

Additional assumptions on the nature of the source covariance can be incorporated to further simplify the expression. Within a convenient source assumption, the expected value 
of the cross-correlation is proportional to the imaginary 
part of the Green's function, e.g., 
\cite{garnier2016,weaver2001,Pham2020Siam},
\begin{equation}
%%  \forall (\bx_1, \bx_2) \in \Omega, \;
    \mathcal{C}^{[p]}(\bx_1 , \bx_2) \propto \Im  \left (\mathcal{G}^{[p]}( \bx_1 , \bx_2) \right )\,,
    \qquad \text{convenient source assumption}.
\end{equation}
Such a result can be obtained by relating the source cross-covariance to the imaginary part of the model parameters, 
i.e., using the attenuation, see, e.g., \cite[Appendix A]{Pham2020Siam}. 
This assumption simplifies the computations but is not realistic in several applications, e.g., \cite{curtis2008,tsai2009,fichtner2014}. It is not considered in our work.
\end{remark}

% -------------------------------------------------------------
\subsection{Computation of the cross-correlation} \label{section:twosteps}
% -------------------------------------------------------------

For the positions of the signals used in the 
cross-correlations $\mathsf{C}(\bx_1,\bx_2)$, 
we introduce $\rcvs$ and $\srcs$, which correspond to the set of 
positions for $\bx_1$ and $\bx_2$, respectively. 
Note that these sets are the same in practice
(as it only correspond to the positions of the receiver devices)
but they are considered distinct here for numerical flexibility. 
The positions in $\srcs$ are further referred to 
as \emph{virtual sources} as the cardinality of 
$\srcs$ defines the number of computation of Green's function,
so we typically consider $\srcs \subset \rcvs$. 
On the other hand, the cardinality of $\rcvs$ only affects 
the number of evaluations of the computed fields.
\cref{algorithm:forward-cc} provides an outline of the 
numerical steps for the computation of the cross-correlation
average.

\begin{algorithm}[ht!]
\caption{The computation of the expected value of the cross-correlation
         assuming that the sources are uncorrelated in space is performed
         by solving two wave problems consecutively.
         }
\label{algorithm:forward-cc}

\begin{algorithmic}
% ko \renewcommand{\arraystretch}{1.5}

\STATE \noindent \makebox[4.0em][l]{\textbf{Inputs:}} 
\hspace{0.50em} Domain $\Omega$ with 
             model parameters $c$, 
             $\rho$ and source covariance 
              $\mathcal{S}$; 
             frequency $\omega$; 
             set of positions $\srcs\subset\Omega$ and $\rcvs\subset\Omega$;  
\hspace{1em} 
\vspace*{0.50em}

\FOR{$\bx_2 \in \srcs$}
\vspace*{0.50em}

 % \STATE $\sharp$ \textit{We solve the wave problem with four right-hand sides to obtain the $4\times4$ solution  $\mathcal{U}_G$}
  \STATE \makebox[26em][l]{$\bullet\,\,$
         Solve the wave problem \cref{eq:direct:wave_equation} 
         with a Dirac source in $\bx_2$:}
         $\mathcal{L} \, \begin{pmatrix}
         u_p \\ \mathbf{u_v}             
         \end{pmatrix} \,=\, \begin{pmatrix}
             \delta_{\bx_2} \\ \mathbf{0}
         \end{pmatrix}.
         $
\vspace*{0.40em}

 % \STATE $\sharp$ \textit{We create the new right-hand side $\widehat{\mathcal{F}}$ of size $4\times4$}
  \STATE \makebox[26em][l]{$\bullet\,\,$
         Compute the RHS ${\mathcal{M}}$
         from \cref{eq:CC_main}:}
         ${\mathcal{M}} \,=\, 
           \mathcal{S}\, 
           \overline{u_p}$.
  \vspace*{0.40em}

  \STATE \makebox[26em][l]{$\bullet\,\,$
         Solve the wave problem \cref{eq:direct:wave_equation} 
         with the source ${\mathsf{M}}$:}
         $\mathcal{L} \, \mathsf{U}_C \,=\, {\mathcal{M}}$. 
  \vspace*{0.40em}
  
  \STATE $\bullet\,\,$ $\mathcal{C}(\bx_1,\bx_2)$ is given by $\mathsf{U}_C(\bx_1)$, for all $\bx_1\in \rcvs$.
  \vspace*{0.40em}
  
\ENDFOR
\end{algorithmic}
\end{algorithm}

\begin{remark}\label{remark:sources}

In \cref{algorithm:forward-cc}, a loop over each 
position $\bx_2$ is introduced. For each of them, 
the wave problem must be solved twice (once with a Dirac 
source and once with the source term $\mathcal{M}$). 
However, in practice, we use the direct solver MUMPS to 
solve the linear system, and we rely on its multiple 
right-hand sides (RHS) feature, \cite{Amestoy2001,amlm:18a}, 
such that, once the matrix factorization is performed, all RHS 
can be solved at a minimal cost. 
Therefore, instead of a loop over the positions $\bx_2$, RHS 
of dimension $\mathrm{Card}(\srcs)$ are assembled, and the system 
is solved in a single resolution step. 

\end{remark}
 
% --------------------------------------------------------------------------
\section{Hybridizable Discontinuous Galerkin discretization}
\label{section:hdg-discretization}
% ----------------------------------------------------------------------

The wave equation~\cref{eq:direct:wave_equation} is discretized using the Hybridizable Discontinuous Galerkin (HDG) method. This approach enables the solution of the first-order system, thereby providing access to both the pressure $p$ and the velocity $\bv$ without any post-processing, while leading to a global linear system involving a single scalar unknown \cite{Faucher2020}. We follow the derivation of the HDG discretization of the acoustic equation~\cref{eq:direct:wave_equation} presented in \cite{Faucher2020}, and we outline below the modifications required for the computation of the covariance.

\subsection{Notations}
\paragraph{Domain discretization}
The domain $\Omega$ is discretized using a non-overlapping partition and we denote with $\mathcal{T}_h$ the mesh composed of $N$ elements $K_e$ such that,
\begin{equation}
    \mathcal{T}_h = \bigcup_{e=1}^N K_e.
\end{equation}

The set of $N_\Sigma$ mesh faces is denoted by $\Sigma$. It is composed of $N_{\Sigma^I}$ interior faces $\Sigma^I$, that are shared by two elements, and $N_{\Sigma^E}$ exterior faces $\Sigma^E$, that lie on the boundary of the computational domain. The set of exterior faces is further decomposed into two disjoint subsets: the subset $\Sigma^E_{\mathrm{ABC}}$, on which absorbing boundary conditions are imposed, and the subset $\Sigma^E_{\mathrm{D}}$, where Dirichlet boundary conditions are prescribed.
In the numerical implementation, simplex elements are employed, namely triangle elements in two  dimensions and tetrahedra in three dimensions.

\paragraph{Function spaces}
To approximate the solution of \cref{eq:direct:wave_equation}, we consider 
piecewise-polynomial functions. Let $\mathbb{P}_\mathfrak{p}(K)$ denote the 
space of polynomials of degree lower than or equal to $\mathfrak{p}$ defined 
on the element $K$. We define, in dimension $\dim$,
\begin{subequations}
\begin{align}
  W_h &= \left\{ w_h \in L^2(\Omega)\; :\;
       w_h|_{K_e} \in \mathbb{P}_{\mathfrak{p}_e}(K_e),\; 
       \quad \forall K_e \in \mathcal{T}_h \right\}, \\
  \mathbf{W}_h &= \left\{ \mathbf{w}_h \in (L^2(\Omega))^\dim\; :\;
        \mathbf{w}_h|_{K_e} \in \big(\mathbb{P}_{\mathfrak{p}_e}(K_e)\big)^\dim,\; \quad
       \forall K_e \in \mathcal{T}_h \right\}, \\
    U_h &= \left \{ u_h \in L^2(\Sigma) \; : \; u_h|_{\mathfrak{f}} \in 
    \mathbb{P}_{\mathfrak{q}_k}(\mathfrak{f}), \quad \forall \mathfrak{f} \in \Sigma \right \}.
\end{align}
\end{subequations}
Each of these spaces is equipped with Lagrange basis functions, denoted by 
$\{\varphi_{e,j}\}_{j \in \{ 1,\ldots, n_{\mathrm{dof}}^e \}}$ 
on each element $K_e \in \mathcal{T}_h$, and 
$\{\xi_{\mathfrak{f},j}\}_{j \in \{ 1,\dots,  n_{\mathrm{dof}}^\mathfrak{f} \}}$ 
on each face $\mathfrak{f} \in \Sigma$.
\paragraph{Jump operator} 
On an interface $\mathfrak{f}$ shared by two elements $K^{+}$ and $K^{-}$, 
the jump of a vector $\bv$ is denoted by double brackets, 
$\llbracket \bv \rrbracket$, such that,
\begin{equation} \label{equation:jump}
\llbracket \bv \cdot \boldsymbol{n} \rrbracket_{\mathfrak{f}}
:= {\mathbf{v}^{+}} \cdot \boldsymbol{n}^{+}
 + {\mathbf{v}^{-}} \cdot \boldsymbol{n}^{-}
 = {\mathbf{v}}^{+} \cdot \boldsymbol{n}^{+}
 - {\mathbf{v}^{-}} \cdot \boldsymbol{n}^{+},
\end{equation}
where $\bv^{\pm}$ denotes $\bv$ taken on the cell $K^{\pm}$, 
and $\boldsymbol{n}^{\pm}$ is the outward-pointing normal 
vector to $\mathfrak{f}$ from $K^{\pm}$.

% -------------------------------------------------------------
\subsection{Hybridizable Discontinuous Galerkin discretization}
% --------------------------------------------------------------

\paragraph{Strong form}
For the system of equations \cref{eq:direct:wave_equation}, the HDG formulation writes as: 
find $(p_h,\mathbf{w}_h,\lambda_h)
\in W_h \times \mathbf{W}_h \times U_h $
such that on each cell $K_e \in \mathcal{T}_h$,
\begin{subequations} 
\begin{empheq}[left=\empheqlbrace]{align}
\dfrac{\ii \omega }{\kappa} p_h  - \nabla \cdot \bv_h &= f, 
& \text{on } K_e,  \\[4pt]
-\ii \omega \rho \bv_h + \nabla p_h  &= 0, &  \text{on } K_e,  \\[4pt]
p_h &= \lambda_h, \label{eq:HDG_strong_face}
& \text{on } \partial K_e, 
\end{empheq}
\end{subequations}

\begin{align}
\text{On each interior face } \mathfrak{f} \in \Sigma^i:&  \qquad
\llbracket \widehat{\bv_h} \cdot \mathbf{n}_{ \mathfrak{f}} \rrbracket = 0, \label{eq:HDG_strong_cont}
\\[6pt]
\text{On each boundary face }  \mathfrak{f} \in \Sigma^e: & \qquad
\begin{cases}
    -\frac{1}{c\rho}\, \lambda_h + \widehat{\bv_h} \cdot \mathbf{n} = 0, \quad &\text{if } \mathfrak{f} \subset \Gamma_\mathrm{abc} \\
    \lambda_h = 0, & \text{if } \mathfrak{f} \subset \Gamma_{\mathrm{d}}   \label{eq:HDG_strong_bc}
\end{cases}
\end{align}
where $\widehat{\bv_h}$ stands for the numerical trace of $\bv_h$.
In the HDG formulation, it is expressed in terms of the trace unknown $\lambda_h$ and 
a stabilization parameter $\tau$, e.g., \cite{Cockburn2009, pham2024, Faucher2020}:

\begin{equation}
    \forall \mathfrak{f} \in \Sigma, \qquad \widehat{\bv_h} \cdot \bn 
    = \bv_h \cdot \bn \, - \, \tau ( p_h - \lambda_h). \label{eq:HDG_stab}
\end{equation}
In the numerical experiments, the stabilization parameter is taken to be $\tau = 1/\rho_0$, where $\rho_0 = \rho(\mathbf{x}_{\mathfrak{f}})$ denotes the density evaluated on the face $\mathfrak{f}$, 
\cite{Faucher2020,nguyen2009implicit}.

\paragraph{Weak form}

To obtain a weak formulation, the strong form is integrated against test functions $ (\varphi_h, \boldsymbol{\psi}_h) \in W_h \times \mathbf{W}_h$
and then we integrate by parts. Replacing the expression of the numerical traces according to \cref{eq:HDG_stab}, we obtain, for each element $K_e \in \mathcal{T}_h$, \cite[Eq. (29)]{Faucher2020}:
\begin{subequations}
    \begin{empheq}[left=\empheqlbrace]{align}
    \ii \omega \int_{K_e} \dfrac{1}{\kappa} p_h \varphi_h +\int_{K_e} \bv_h \cdot \nabla \varphi_h  - \int_{\partial K_e } \big ( \bv_h \cdot \bn - \tau ( p_h - \lambda_h )  \big ) \varphi_h  &= \int_{K_e} f \varphi_h , \\
    - \ii \omega \int_{K_e} \rho \bv_h \cdot \boldsymbol{\psi}_h  - \int_{K_e} p_h \nabla \cdot \boldsymbol{\psi}_h + \int_{\partial K_e} \lambda_h \boldsymbol{\psi}_h \cdot \bn &=0.
    \end{empheq} \label{eq:HDG_weak_local1}
\end{subequations}

We perform a reverse integration by part on the 
first equation to obtain,
\begin{subequations}
    \begin{empheq}[left=\empheqlbrace]{align}
    \ii \omega \int_{K_e} \dfrac{1}{\kappa} p_h \varphi_h
      - \int_{K_e} (\nabla \cdot \bv_h)\, \varphi_h
      + \int_{\partial K_e} \tau \left( p_h - \lambda_h \right) \varphi_h
      &= \int_{K_e} f \varphi_h , \\
    - \ii \omega \int_{K_e} \rho \bv_h \cdot \boldsymbol{\psi}_h  - \int_{K_e} p_h \nabla \cdot \boldsymbol{\psi}_h + \int_{\partial K_e} \lambda_h \boldsymbol{\psi}_h \cdot \bn &=0.
    \end{empheq} \label{eq:HDG_weak_local}
\end{subequations} 
On the faces of the elements, 
\cref{eq:HDG_strong_face,eq:HDG_strong_bc,eq:HDG_strong_cont}, 
are integrated against test functions $\xi_h \in U_h$, 
and we obtain the following.
\begin{itemize}
    \item On interior faces, $\mathfrak{f} \in \Sigma^i$, 
          from \cref{eq:HDG_stab,eq:HDG_strong_cont} we have,
    \begin{equation}
        \int_{\mathfrak{f}} 
        \llbracket \bv_h \cdot \bn - \tau \left( p_h - \lambda_h \right) \rrbracket 
        \, \xi_h \, = 0,
        \qquad \forall \xi_h \in U_h. \label{eq:HDG_weak_int}
    \end{equation}
    
    \item On the boundary, we distinguish the faces according to the prescribed boundary conditions. 
    For absorbing boundary conditions, we use \cref{eq:HDG_stab} such that, 
    \begin{equation}
        \forall \mathfrak{f} \in \Sigma^e \cap \Gamma_{\mathrm{ABC}}, \quad
        \int_{\mathfrak{f}} 
        \left( 
            \bv_h \cdot \bn 
            - \tau \left( p_h - \lambda_h \right) 
            - \dfrac{1}{\rho c} \lambda_h 
        \right)
        \xi_h \, 
        = 0,
        \qquad \forall \xi_h \in U_h. \label{eq:HDG_weak_extABC}
    \end{equation}
    
    \item The Dirichlet conditions are weakly imposed:
    \begin{equation}
        \forall \mathfrak{f} \in \Sigma^e \cap \Gamma_{\mathrm{D}}, \quad
        \int_{\mathfrak{f}} 
        \lambda_h \, \xi_h \,
        = 0,
        \qquad \forall \xi_h \in U_h. \label{eq:HDG_weak_extDir}
    \end{equation}
\end{itemize}
The system composed of \cref{eq:HDG_weak_local,eq:HDG_weak_int,eq:HDG_weak_extABC,eq:HDG_weak_extDir} 
forms the weak formulation of the HDG discretized problem. 

% -----------------------------
\paragraph{Matrix representation}
% -----------------------------

The solutions are represented in a Lagrange polynomial basis with 
$n_{\mathrm{dof}}^e$ degrees of freedom on each element $K_e$, 
denoted by $\{\varphi_{e,k}\}_{k=1}^{n_{\mathrm{dof}}^e}$. 
On the element $K_e$, the coefficients of the unknowns 
$\bv_h|_{K_e}$ and $p_h|_{K_e}$ are denoted by $\mathrm{u}_{e,k}^{[\bullet]}$, 
with $\bullet \in \comp$ and $k = 1, \ldots, n_{\mathrm{dof}}^e$. We have,
\begin{equation} \label{eq:lagrange_rep}
    p_h|_{K_e} 
    = \sum_{k=1}^{n_{\mathrm{dof}}^e} 
    \mathrm{u}_{e,k}^{[p]} \, \varphi_{e,k}, \quad
    v_{x,h}|_{K_e} 
    = \sum_{k=1}^{n_{\mathrm{dof}}^e} 
    \mathrm{u}_{e,k}^{[v_x]} \, \varphi_{e,k}, \quad
    \text{and similarly for } v_{y,h}, v_{z,h}.
\end{equation}
The concatenation of the solution coefficients on element $K_e$ is denoted by
\begin{equation} \label{eq:hdg-unknown-representation}
    \boldsymbol{\mathrm{U}}_e 
    = \Big( \mathrm{U}_e^{[v_x]} \; \mathrm{U}_e^{[v_y]} \; \mathrm{U}_e^{[v_z]} \; \mathrm{U}_e^{[p]} \Big)^{\mathrm{t}}, 
    \qquad \text{with}
    \quad \mathrm{U}_e^{[\bullet]} = \Big ( \mathrm{u}_{e,k}^{[\bullet]} \Big)_{k = 1, \ldots, n_{\mathrm{dof}}^e},
    \quad \bullet \in \comp,
\end{equation}
where the notation 
$(u_k)_{k=1,\ldots,n}$ denotes a column vector of size $n$.
The HDG unknown $\lambda_h$ (which approximates the trace of the scalar pressure field) is defined on the faces of the element such that
\begin{equation}
    \lambda_h|_{\mathfrak{f}} = \sum_{k=1}^{n_{\mathrm{dof}}^\mathfrak{f}} 
    \mathrm{q}_{\mathfrak{f},k} \, \xi_{\mathfrak{f},k}.
\end{equation}
Its coefficients are gathered in a global vector $\Lambda$:
\begin{equation}
    \Lambda \,=\, 
    \left( \boldsymbol{\mathrm{q}}_{\mathfrak{f}} \right)_{\mathfrak{f}\in \Sigma}
    \, , \qquad \text{with} \quad
    \boldsymbol{\mathrm{q}}_{\mathfrak{f}} \,=\, 
    \left( \mathrm{q}_{\mathfrak{f},k} \right)_{k=1,\ldots,n_{\mathrm{dof}}^\mathfrak{f}} \,.
\end{equation}
We also introduce the restriction matrix $\mathcal{R}_e$, which 
extracts from $\Lambda$ the coefficients associated 
to the faces of element $K_e$:
\begin{equation}
    \mathcal{R}_e \, \Lambda \,=\, 
    \left( \boldsymbol{\mathrm{q}}_{\mathfrak{f}} \right)_{\mathfrak{f}\in \partial K_e} \,.
\end{equation}

Using the discretized representation of the unknowns, the equations \cref{eq:HDG_weak_local,eq:HDG_weak_int,eq:HDG_weak_extABC,eq:HDG_weak_extDir} 
are written in matrix form such that,
cf.~\cite[Eq.~(3.24)]{pham2024}, \cite[Eqs.~(35), (38)]{Faucher2020},
\begin{subequations}\label{eq:HDG_mat_full}
\begin{empheq}[left=\empheqlbrace]{align}
    \mathbb{A}_e \boldsymbol{\mathrm{U}}_e + \mathbb{D}_e \mathcal{R}_e \Lambda 
    &= \mathbb{F}_e, \hspace{2em} \forall K_e \in \mathcal{T}_h, \label{eq:HDG_mat_loc}\\
    \sum_{e=1}^{n_\text{cell}} 
    \mathcal{R}_e^\mathrm{t} 
    \left( 
        \mathbb{B}_e \brmU_e 
        + \mathbb{L}_e \mathcal{R}_e \Lambda 
    \right) &= 0, \label{eq:HDG_mat_glob}
\end{empheq}
\end{subequations}
where we refer to \cite[Section 3.4]{Faucher2020} for the 
explicit description of the local HDG matrices $\mathbb{A}_e$,
$\mathbb{D}_e$, $\mathbb{B}_e$, $\mathbb{L}_e$ that  
involve the integration of the basis functions on the element.
Here, the right-hand side $\mathbb{F}_e$ is given by,
\begin{equation}\label{eq:hdg-rhs}
    \mathbb{F}_e 
    = \begin{pmatrix}
        \mathrm{F}_e & \mathbf{0}
    \end{pmatrix}^{\mathrm{t}},
    \qquad
    \text{with }
    \mathrm{F}_e
    = \big( 
        \langle f , \varphi_{e,k} \rangle_{K_e} 
      \big)_{k = 1, \ldots n_{\mathrm{dof}}^e},
\end{equation}
where $\langle \cdot, \cdot \rangle_{K_e}$ denotes the $L^2$ inner product 
on $K_e$.

To solve \cref{eq:direct:wave_equation}, we first eliminate 
the local unknowns $\brmU_e$ from \cref{eq:HDG_mat_glob} using 
\cref{eq:HDG_mat_loc}, and solve the following linear system 
for the global unknown $\Lambda$:
\begin{equation}
\mathbb{K} \, \Lambda \, = \, \mathbb{H} \, \quad\text{ with }\quad
\mathbb{K} \,=\, \sum_{e=1}^{n_\text{cells}} 
    \mathcal{R}_e^\mathrm{t} 
    \left(
        \mathbb{L}_e 
        \,-\, 
        \mathbb{B}_e \mathbb{A}_e^{-1} \mathbb{D}_e 
    \right) \mathcal{R}_e   \,;\quad
\mathbb{H} \,=\, 
    \,-\, 
    \sum_{e=1}^{n_\text{cell}} 
    \mathcal{R}_e^\mathrm{t}
    \left(  
        \mathbb{B}_e \mathbb{A}_e^{-1}
            \mathbb{F}_e \right) \,. \label{eq:HDG_mat}
\end{equation}
Therefore, the HDG forward solver works in two steps: first
the global unknown $\Lambda$ is solved from the
linear system that only involves degrees of freedom on the 
faces of the element. Then, the volume solution $\brmU_e$ 
is recovered by solving local (on each element) systems 
with \cref{eq:HDG_mat_loc}. Note that this operation is 
embarrassingly parallel and involves the resolution of relatively
small, dense systems.

\subsection{Computation of the components of the cross-correlation} \label{section:HDG_CC}
% --------------------------------------------------------------------

The computation of the cross-correlation involves 
the consecutive resolution of two problems with
different RHS, as given in \cref{section:twosteps}. 
Here, we describe their construction and the problems 
solved for the HDG discretization.

\paragraph{Step 1: Computation of the Green's function}
The first problem employs Dirac sources to obtain 
the Green's function $\mathsf{G}$, see \cref{algorithm:forward-cc}.
Let us denote the  HDG discretized unknowns for a given
column $j \in \comp$ of the Green's tensor 
as ($\boldsymbol{\mathrm{G}}_e$, $\Lambda_G$). 
Similar to \cref{eq:hdg-unknown-representation},
$\boldsymbol{\mathrm{G}}_e$ is composed of four blocks:
\begin{equation}
\boldsymbol{\mathrm{G}}_e
=
\big(
    \mathrm{G}_e^{[i]}
\big)_{ i \in \comp} \,. \end{equation}
The couple $(\mathrm{G}_e, \Lambda_G)$ 
solves the discretized HDG system \cref{eq:HDG_mat_full}:
\begin{subequations} \label{eq:HDG_green}
\begin{empheq}[left=\empheqlbrace]{align}
 &\quad  \mathbb{A}_e {\boldsymbol{\mathrm{G}}}_e
    + \mathbb{D}_e \mathcal{R}_e \Lambda_G
    \,=\, \mathbb{P}_e,
    \qquad \forall K_e \in \mathcal{T}_h, 
    \label{eq:HDG_greenA} \\
 &\quad  \sum_{e=1}^{n_{\mathrm{cell}}} 
    \mathcal{R}_e^\mathrm{t} 
    \left(
        \mathbb{B}_e {\boldsymbol{\mathrm{G}}}_e 
        + \mathbb{L}_e \mathcal{R}_e \Lambda_G
    \right)
    \,=\, \mathbf{0}\, \,.
\end{empheq}
\end{subequations}
Each RHS $\mathbb{P}_e$ 
has four blocks, cf.~\cref{eq:hdg-rhs}, and is given by 
\begin{equation}\label{eq:hdg-rhs-step1}
\mathbb{P}_e \,=\,
\begin{pmatrix}
\mathrm{P}_e & \mathbf{0}
\end{pmatrix}^\mathrm{t}, 
\qquad \text{with} \quad
\mathrm{P}_e \,=\,
\begin{cases}
    \big( \varphi_{e,k}(\bx_2)    \big)_{k = 1, \ldots, \ndof_e}
    \quad \text{if } \bx_2 \in K_e, \\
0 \,\, \text{otherwise}.
\end{cases}
\end{equation}
From the coefficients
$\mathrm{G}_e^{[i]}$, the approximation of the Green's 
function on cell $K_e$ is denoted by $\widehat{\mathrm{G}}_e^{[i]}$ and is given by
\begin{equation} \label{eq:G_approx}
    G_h^{[i]}(\bx) \mid_{K_e} \,=\, \widehat{\mathrm{G}}_e^{[i ]}(\bx) 
    \,=\, 
    \sum_{k=1}^{n_{\mathrm{dof}}^e} \, 
    \mathrm{G}_{e,k}^{[i]} \, \varphi_k(\bx) \,.
\end{equation}
% ====================================
\paragraph{Step 2: Computation of the 
expected value of cross-correlations}
% ====================================

We denote by ($\boldsymbol{\mathrm{C}}_e$, $\Lambda_C$)
the HDG discretized unknowns corresponding the expected 
value of the cross-correlation.
They solve the HDG system, 
\begin{subequations} \label{eq:HDG_cc}
\begin{empheq}[left=\empheqlbrace]{align}
 &\quad  \mathbb{A}_e {\boldsymbol{\mathrm{C}}}_e
    + \mathbb{D}_e \mathcal{R}_e \Lambda_C
    \,=\, \mathbb{M}_e,
    \qquad \forall K_e \in \mathcal{T}_h, \label{eq:HDG_ccA}\\
 &\quad  \sum_{e=1}^{n_{\mathrm{cell}}} 
    \mathcal{R}_e^\mathrm{t} 
    \left(
        \mathbb{B}_e {\boldsymbol{\mathrm{C}}}_e 
        + \mathbb{L}_e \mathcal{R}_e \Lambda_C
    \right)
    \,=\, \mathbf{0}\, \,.
\end{empheq}
\end{subequations}
The RHS $\mathbb{M}_e = \begin{pmatrix}
    \mathrm{M}_e & \mathbf{0}
\end{pmatrix}^{\mathrm{t}}$
is given by, see \cref{eq:CC_main},
for $l = 1, \dots, \ndof_e$,
\begin{equation}\label{eq:hdg-rhs-step2}
\mathrm{M}_{e,l}
\,=\, \int_{K_e}
\mathcal{S} (\bx) \,
\overline{\widehat{\mathrm{G}}_e^{[p]}} \varphi_l(\bx)
\,=\, \int_{K_e}
 \mathcal{S}(\bx) \, \varphi_l(\bx)
 \sum_r \, \overline{\mathrm{G}_{e,r}^{[p]}}\,
 \, \varphi_r(\bx) \,,
\end{equation}
and $\mathrm{M}_e = (\mathrm{M}_{e,l})_{l=1}^{\ndof_e}$.

\cref{algorithm:forward-cc-hdg} outlines the computational procedure within the HDG discretization framework. In this algorithm, the operations involving loops over $\bx_2$ are divided into two separate stages rather than being executed consecutively in a single loop. This choice is motivated by the multiple RHS feature of the direct solver, see \cref{remark:sources}, which enables efficient computation. Specifically, all solutions $\brmG_e$ can be computed simultaneously by assembling the corresponding RHS vectors into a single system. A similar strategy is employed for $\brmC_e$, allowing all associated solutions to be obtained in a single solver call.

\begin{algorithm}[ht!]
\caption{
Computation of the expected value of the cross-correlation 
in the HDG discretization framework. The algorithm is structured 
with two consecutive parts with loops over $\srcs$ and $\ncomp$ to emphasize the use of the solver with multiple right-hand sides, 
such that once the global matrix $\mathbb{K}$ is factorized, only 
two solver calls are required.
}
\label{algorithm:forward-cc-hdg}

\begin{algorithmic}[ht!]

\STATE{\# \textit{Prepare the HDG matrices}}
\FOR{$e \in n_{\mathrm{cells}}$}
  \STATE Build HDG discretized matrices: $\mathbb{A}_e$, $\mathbb{B}_e$, $\mathbb{D}_e$, $\mathbb{L}_e$
\ENDFOR
\STATE Assemble the HDG global matrix $\mathbb{K}$ and perform its factorization.
\vspace*{0.50em}

\STATE{\# \textit{Part 1: solve for the Green's kernel}}
\FOR{$\bx_2 \in \srcs$}
  % -------------------------------------------
% \FOR{$j \in 1,\ldots \ncomp$}
  \vspace*{0.20em}

  \STATE \makebox[12.5em][l]{Build local RHS matrix:} $\mathbb{P}_e$ from \cref{eq:hdg-rhs-step1};
  \STATE \makebox[12.5em][l]{Assemble global RHS:} 
         $\mathbb{H}_G \,=\, \,-\, \sum_{e=1}^{n_\text{cell}}  
                                   \mathcal{R}_e^\mathrm{t} 
                                   \left(\mathbb{B}_e \mathbb{A}_e^{-1} \mathbb{P}_e 
                                   \right) \,$; 
  \STATE \makebox[12.5em][l]{Solve HDG global system} $\mathbb{K} \, \Lambda_G \,=\, \mathbb{H}_G$
  \STATE Retrieve the local solutions $\brmG_e$ from $\Lambda_G$ with \cref{eq:HDG_greenA}.
  \vspace*{0.20em}
  
% \ENDFOR
\ENDFOR
  \vspace*{0.50em}
  % -------------------------------------------

\STATE{\# \textit{Part 2: solve for the Cross-correlation}}
\FOR{$\bx_2 \in \srcs$}
  % -------------------------------------------
% \FOR{$j \in 1,\ldots \ncomp$}
  \vspace*{0.20em}

  \STATE \makebox[12em][l]{Build local RHS matrix:} $\mathrm{M}_e$ from \cref{eq:hdg-rhs-step2}
  \STATE \makebox[12em][l]{Assemble global RHS:} 
         $\mathbb{H}_C \,=\, \,-\, \sum_{e=1}^{n_\text{cell}}  
                                   \mathcal{R}_e^\mathrm{t} 
                                   \left(\mathbb{B}_e \mathbb{A}_e^{-1} \mathbb{M}_e 
                                   \right) \,$;
  \STATE \makebox[12em][l]{Solve HDG global system:} $\mathbb{K} \, \Lambda_C \,=\, \mathbb{H}_C$;
  \STATE Retrieve the local solutions $\brmC_e$ from $\Lambda_C$ with \cref{eq:HDG_ccA}
  \vspace*{0.20em}
% \ENDFOR
\ENDFOR
% \FOR{$\bx_1 \in \sum_{rcv}$}
% \STATE Compute and store the expected cross-correlation value as  \note{?}
% \ENDFOR
\end{algorithmic}
\end{algorithm}

% --------------------------------------------------------------------------
\section{Quantitative nonlinear inverse problem}
\label{section:inversion}

In this section, we describe the methodology for the nonlinear quantitative 
inversion. The model parameters characterising the wave problem, namely the 
wave speed $c$, the density $\rho$, and the source covariance $\mathcal{S}$, 
are gathered into the vector of unknowns $\bm = (c, \rho, \mathcal{S})$. Given expected values of cross-correlations, the objective is to find 
$\bm$ such that simulation can best reproduce the data, in the spirit 
of Full Waveform Inversion for seismic inversion, \cite{Virieux2009}. 
The reconstruction is thus formulated as a minimization problem, with an
iterative procedure in a Newton-type algorithm. Due to the large-scale computations, 
only first-order information (the gradient) is used, typically with methods such
as gradient descent, nonlinear conjugate gradients or L-BFGS~\cite{nocedal2006}. 
To avoid having to form explicitly the Jacobian matrix of the forward operator, 
the gradient is computed using the adjoint-state method, 
(e.g., \cite{chavent1974,plessix2006,Chavent2010,kern2016,barucq2019,Faucher2020}) 
which formulation must be adapted to the expected cross-correlation and the HDG discretization.
In particular, contrary to the case of FWI with direct wavefield measurements, \cite{Faucher2020}, two adjoint states have to be computed when working with the cross-correlation.

% ------------------------------------------------------------------
% ------------------------------------------------------------------
\subsection{Iterative minimization}
% ------------------------------------------------------------------
The inverse problem is formulated as a minimization of a misfit functional
$J$, defined as a distance function between observations and simulations.
We denote by $d^{[i]}_{\mathrm{obs},\bx_1,\bx_2}$ the observation
of the expected cross-correlation between the pressure field
(following \cref{section:equations})
and the field '$i$' at positions $\bx_2$ and $\bx_1$ respectively.
Similarly, $\widehat{\mathcal{C}}^{[i]}_{\bx_2, \bm}(\bx_1)$ denotes
the corresponding simulation using model parameter $\bm$.
In this work we consider the least-squares distance and the misfit functional $J$ is written as 
\begin{equation} \label{equation:misfit}
    J(\bm) = \frac{1}{2} \sum_{\bx_1 \in \rcvs} \sum_{\bx_2 \in \srcs}
    \sum_{i \in \comp_{\mathrm{d}}}
    \left| \widehat{\mathcal{C}}^{[i]}_{\bx_2, \bm}(\bx_1)
    - 
  % d^{[i]}_{\mathrm{obs},\bx_1,\bx_2} \right|^2,
    d^{[i]}_{\bx_2}(\bx_1) \right|^2,
\end{equation}
where $\comp_{\mathrm{d}}$ denotes the set of components used,
$\comp_{\mathrm{d}} \subseteq \comp$. As discussed
in \cref{section:twosteps}, one can consider a different set of positions
for $\bx_2 \in \srcs$ and $\bx_1 \in \rcvs$ in order to reduce the computational
cost, as only the cardinality of $\srcs$ determines the number of forward problems
to solve.
The simulated fields $\widehat{\mathcal{C}}^{[i]}_{\bx_2,\bm}$ are obtained from the
restriction of the total field $\brmC_{\bx_2,\bm}$ that contains all components,
and we introduce the (linear) observation operator $\mathfrak{R}$ such that,
\begin{equation}
    \mathfrak{R} \, \brmC_{\bx_2, \bm} \,=\,
    \left( \widehat{\mathcal{C}}^{[i]}_{\bx_2, \bm}(\bx_1) \right)_{\substack{\bx_1 \in \rcvs \\ i \in \comp_\mathrm{d}}} \,,
    \quad \text{with} \quad
    \brmC_{\bx_2,\bm}\,=\, \left( \widehat{\mathcal{C}}^{[i]}_{\bx_2,\bm} \right)_{i \in \comp}.
\end{equation}
where $\brmC_{\bx_2, \bm}$ is obtained from
\cref{algorithm:forward-cc-hdg}.
Using $\mathbf{d}_{\mathrm{obs}, \bx_2}$ to denote the vector of observed components,
\begin{equation}
%    \mathbf{d}_{\mathrm{obs}, \bx_2} \,=\,
%    \left( d^{[i]}_{\mathrm{obs},\bx_1,\bx_2} \right)_{\substack{\bx_1 \in \rcvs \\ i \in \comp_{\mathrm{d}}}},
    \mathbf{d}_{\bx_2} \,=\,
    \left( d^{[i]}_{\bx_2} (\bx_1) 
    \right)_{\substack{\bx_1 \in \rcvs \\ i \in \comp_{\mathrm{d}}}},
\end{equation}
the misfit functional \cref{equation:misfit}
is written in compact form as
\begin{equation}
    J(\bm) = \frac{1}{2} \sum_{\bx_2 \in \srcs}
    \left\| \mathfrak{R}\, \brmC_{\bx_2,\bm} - \mathbf{d}_{\bx_2} \right\|^2,
\end{equation}
where $\|\cdot\|$ denotes the Euclidean norm.
% ================================================
\paragraph{Representation of the model parameters}
% ================================================

In our implementation, the model parameters $(c, \rho, \mathcal{S})$ 
are represented with Lagrange basis functions on each cell, 
independently of the discretization order of the equations.
We denote by $\mathfrak{c}_e$, $\uprho_e$ and $\mathbf{S}_e$ the vectors 
of degrees of freedom representing respectively 
$c|_{K_e}$, $\uprho|_{K_e}$ and $\mathcal{S}|_{K_e}$, and by 
$\hat{\mathfrak{c}}_e$, $\hat{\uprho}_e$, $\hat{\mathbf{S}}_e$ the 
corresponding functions, defined on each element $K_e \in \mathcal{T}_h$.
The number of degrees of freedom used for the representation of 
$c$, $\rho$ and $\mathcal{S}$ on $K_e$ are denoted respectively 
$n_e^{\mathfrak{c}}$, $n_e^{\uprho}$ and $n_e^{\mathbf{S}}$, and may 
differ from one another and from $\ndof_e$, the number of degrees of 
freedom of the approximation space.
The representations are,
\begin{equation}\begin{aligned}
& \forall \bx \in K_e, \hspace*{4em}
\mathfrak{c}_e := \left( \mathfrak{c}_{e,k} \right)_{k=1,\ldots,n_e^{\mathfrak{c}}}\,, \quad
\uprho_e       := \left( \uprho_{e,k} \right)_{k=1,\ldots,n_e^{\uprho}}\,, \quad
\mathbf{S}_e   := \left( \mathbf{S}_{e,k} \right)_{k=1,\ldots,n_e^{\mathbf{S}}}\,; \\
& 
\hat{\mathfrak{c}}_e(\bx) \,=\, \sum_{k=1}^{n_e^{\mathfrak{c}}} \mathfrak{c}_{e,k} \, \varphi_{e,k}^{\mathfrak{c}}(\bx), \quad
\hat{\uprho}_e(\bx) \,=\, \sum_{k=1}^{n_e^{\uprho}} \uprho_{e,k} \, \varphi_{e,k}^{\uprho}(\bx), \quad
\text{and} \quad \hat{\mathbf{S}}_e(\bx) \,=\, \sum_{k=1}^{n_e^{\mathbf{S}}} \mathbf{S}_{e,k} \, \varphi_{e,k}^{\mathbf{S}}(\bx), \label{eq:param_dof}
\end{aligned}\end{equation}
where $\varphi_{e,k}^{\mathfrak{c}}$, $\varphi_{e,k}^{\uprho}$ and 
$\varphi_{e,k}^{\mathbf{S}}$ are $k$-th Lagrange basis functions on $K_e$ 
associated with $c$, $\rho$, $\brmS$. 
%(that may differ from $\varphi_{e,k}$ used to represent the wave solutions).
Note that in the case where the model parameters 
are represented with piecewise constants we have $n_e^\bullet=1$; 
this is however not accurate to represent complex 
media, as highlighted in \cite[Fig. 7]{Faucher2020}.
For simplicity we ignore attenuation in which case 
the wave speed \cref{eq:wavespeed} is complex-valued. Otherwise, one
has to explicit the dependency of the complex
wave speed with respect to real-valued parameters, cf.~\cite{Faucher2023viscoacoustic}.

We introduce the global vectors $\mathfrak{c}$, 
$\uprho$ and $\mathbf{S}$, 
obtained by concatenating the 
element-wise vectors $\mathfrak{c}_e$, $\uprho_e$ and 
$\mathbf{S}_{e}$ over all elements $K_e \in \mathcal{T}_h$, 
respectively. For instance for the wave speed we have 
\begin{equation}
  \mathfrak{c} \,:=\, 
  \left( \mathfrak{c}_e \right)_{K_e \in \mathcal{T}_h}\,,
\end{equation}
and similarly for the other parameters.
The misfit functional $J$ is considered as a function of 
these discretized model parameters whose coefficients are 
the unknowns of the inversion, and we write,
\begin{equation}
\mathbf{m} := (\mathfrak{c}, \uprho, \mathbf{S}).
\end{equation}

% ======================================================================
\subsection{Gradient computation with the adjoint-state method}
% ======================================================================

The gradient of the misfit functional with respect to $\mathbf{m}$ is 
computed with the \emph{adjoint-state method}. The key is to adapt this 
standard method to work with the cross-correlation 
formulation~\cite{fichtner2016,muller2024} and the HDG 
discretization, \cite{Faucher2020}. 
In the following, to lighten the notation we consider only a single 
position $\bx_2$, noting that by linearity multiple positions 
can be handled readily. We therefore write the misfit function as follows,
omitting the dependence on $\bx_2$, 
\begin{equation}
    J(\bm) = \frac{1}{2} \left\| \mathfrak{R}\brmC_{\bm} 
  % - \mathbf{d}_{\mathrm{obs}} \right\|^2 \, .
    - \mathbf{d} \right\|^2 \, .
\end{equation}

\paragraph{Notation}
The space of local HDG solutions on an element $K_e$ is denoted $\boldsymbol{\Psi}_e$,
\begin{equation}
    \boldsymbol{\Psi}_e = \mathbb{C}^{n_{\mathrm{dof},e} \times \ncomp},
\end{equation}
and $\boldsymbol{\Psi}:=(\boldsymbol{\Psi}_e)_{K_e\in\mathcal{T}_h}$ 
denotes the global solution space. 
The space of the numerical trace defined on the mesh skeleton 
is denoted $\boldsymbol{\Lambda}$: 
\begin{equation}
    \boldsymbol{\Lambda} =\mathbb{C}^{n_{\mathrm{dof}}^\Lambda},
\end{equation}
and $\boldsymbol{\Lambda}_e$ denotes the restriction 
of $\boldsymbol{\Lambda}$ to element $K_e$, such that 
$\mathcal{R}_e \Lambda \in \boldsymbol{\Lambda}_e$.

\begin{notation} \label{def:inner_product}
We denote with $( \cdot , \cdot)_{\boldsymbol{\Psi}_e}$ and with $( \cdot, \cdot )_{\boldsymbol{\Psi}_e}$ the inner product on $\boldsymbol{\Psi}_e$ and $\boldsymbol{\Psi}$ respectively, such that:
\begin{align}
    &\forall (\brmU_e , \brmV_e) \in \boldsymbol{\Psi}_e^2, \quad 
    (\brmU_e , \brmV_e)_{\boldsymbol{\Psi}_e} 
    = \sum_{i \in \comp} \sum_{k=1}^{n_{\mathrm{dof},e}} 
      \mathrm{U}_{e,k}^{[i]} \, \overline{\mathrm{V}_{e,k}^{[i]}}, 
      \\
      &\forall (\brmU , \brmV) \in \boldsymbol{\Psi}^2, \quad 
    (\brmU, \brmV)_{\boldsymbol{\Psi}} = \sum_{K_e \in \mathcal{T}_h} 
    (\brmU_e , \brmV_e)_{\boldsymbol{\Psi}_e}.
\end{align}
where $\mathrm{U}_e^{[i]}$ and $\mathrm{V}_e^{[i]}$ denote the vector blocks 
of $\brmU_e$ and $\brmV_e$, respectively.
The inner products $(\cdot, \cdot)_{\boldsymbol{\Lambda}}$ and 
$(\cdot, \cdot)_{\boldsymbol{\Lambda}_e}$ are defined analogously, as 
standard Hermitian inner products summed over the components in $\comp$.

\end{notation}

% =================================
\subsubsection{Lagrangian function}
% =================================

We introduce the Lagrangian function $\mathcal{L}$ such that
\begin{align}
\mathcal{L}(
\bm, \brmG, \Lambda_G,
\brmC, \Lambda_C,
\boldsymbol{\gamma}_{1}, \Gamma_1,
\boldsymbol{\gamma}_{2}, \Gamma_2
)
&=
\dfrac{1}{2}
\left\|
\mathfrak{R}\brmC - \bd
\right\|^2
\nonumber + \mathcal{L}_G(
\bm, \brmG, \Lambda_G,
\boldsymbol{\gamma}_{1}, \Gamma_1
)
\nonumber \\
&\quad
+ \mathcal{L}_C(
\bm, {\brmC}, \Lambda_C,
{\brmG},
\boldsymbol{\gamma}_{2}, \Gamma_2
),
\label{eq:lagrangian_def_gen}
\end{align}
where $\mathcal{L}_G$ and $\mathcal{L}_C$ denote the constraint terms associated with the HDG discretization for the simulation 
of the Green's function \cref{eq:HDG_green}  and cross-correlation \cref{eq:HDG_cc} systems, respectively:
\begin{subequations} \label{eq:Lagragian_def_loc}
\begin{align}
\mathcal{L}_G
&=
\sum_e \left (
 \left(
    \mathbb{B}_e \brmG_e
    + \mathbb{L}_e \mathcal{R}_e \Lambda_G
\right),
\, \mathcal{R}_e \Gamma_1
\right )_{\boldsymbol{\Lambda}_e}+
\sum_e
\left (
    \mathbb{A}_e \brmG_e
    + \mathbb{D}_e \mathcal{R}_e \Lambda_G
    - \mathbb{P}_e,
\, \boldsymbol{\gamma}_{1,e}
\right )_{\boldsymbol{\Psi}_e},
\\[6pt]
\mathcal{L}_C
&=
\sum_e \left (
 \left(
    \mathbb{B}_e \brmC_e
    + \mathbb{L}_e \mathcal{R}_e \Lambda_C
\right),
\, \mathcal{R}_e\Gamma_2
\right )_{\boldsymbol{\Lambda}_e}+
\sum_e
\left (
    \mathbb{A}_e \brmC_e
    + \mathbb{D}_e \mathcal{R}_e \Lambda_C
    - \mathbb{M}_e(\overline{\brmG}_e, \bm),
\, \boldsymbol{\gamma}_{2,e}
\right )_{\boldsymbol{\Psi}_e},
\end{align}
\end{subequations}
Here $\brmG = (\brmG_e)_{K_e \in \mathcal{T}_h} \in \boldsymbol{\Psi}$ and 
$\brmC = (\brmC_e)_{K_e \in \mathcal{T}_h} \in \boldsymbol{\Psi}$, with $\brmG_e$, $\brmC_e \in \boldsymbol{\Psi}_e$, 
denote the concatenations of the coefficients of the local HDG solutions, 
and $\Lambda_G, \Lambda_C \in \boldsymbol{\Lambda}$ are the ones for the 
numerical traces.  
The Lagrange multipliers (adjoint variables) are the two couples
$(\boldsymbol{\gamma}_1,\Gamma_1)$ and $(\boldsymbol{\gamma}_2,\Gamma_2)$,
where
$\boldsymbol{\gamma}_1 = (\boldsymbol{\gamma}_{1,e})_{K_e \in \mathcal{T}_h} \in \boldsymbol{\Psi}$ and 
$\boldsymbol{\gamma}_2 = ({\gamma}_{2,e})_{K_e \in \mathcal{T}_h} \in \boldsymbol{\Psi}$ 
represent the adjoint velocity fields,
with $\boldsymbol{\gamma}_{1,e}, \boldsymbol{\gamma}_{2,e} \in \boldsymbol{\Psi}_e$, 
and $\Gamma_1$, $\Gamma_2 \in \boldsymbol{\Lambda}$ represent the adjoint pressure fields. Similarly to $\mathbf{G}$, we denote by $\gamma_k^{[i]}$ the block of $\gamma_k$ corresponding to the $i$  unknown, where $i \in \comp$.
Note that the model parameters of $\bm$ are involved in the HDG 
matrices $\mathbb{A}_e$, $\mathbb{L}_e$  and $\mathbb{M}_e$.

We recall that the right-hand side matrix $\mathbb{M}_e$ is 
defined in \cref{eq:hdg-rhs-step2}, and depends on the source-covariance 
$\mathcal{S}$. Replacing $\mathcal{S}$ by its finite-dimensional representation 
$\hat{\mathbf{S}}_e$ \cref{eq:param_dof}, we have
$\mathbb{M}_e(\overline{\brmG}_e, \mathbf{m}) = \mathbb{M}_e(\overline{\hat{\brmG}}_e, \hat{\mathbf{S}}_e) = ( \mathrm{M}_e \; \mathbf{0} )^{\mathrm{t}}$.
In particular, the $l$-th component of the non-zero block of $\mathbb{M}_e$ is given by
\begin{equation}
\left(
    \mathrm{M}_e \big( \overline{\hat{\brmG}}_e, \hat{\mathbf{S}}_e \big) 
\right)_l
=
\left\langle
    \left( \sum_{m=1}^{n_e^{\mathbf{S}}} \mathbf{S}_{e,m} \, \varphi_{e,m}^{\mathbf{S}}(\bx) \right)
    \left( \sum_{r=1}^{\ndof_e} \overline{\mathrm{G}_{e,r}^{[p]}} \, \varphi_{e,r}(\bx) \right),
    \varphi_{e,l}(\bx)       
\right\rangle_{K_e},
\end{equation}
for $l = 1, \ldots, \ndof_e$. Here, 
we recall that $\langle \cdot , \cdot \rangle_{K_e}$ 
denotes the $L^2$ Hermitian inner product on $K_e$, such
that for two functions $f$ and $g$, $\langle f, \, g\rangle_{K_e} \,=\, \int_{K_e} f \, \overline{g}$.

\begin{proposition} \label{prop:source_mat}
The following equality holds for any real-valued function $\hat{\mathbf{S}}_e$
given by \cref{eq:lagrange_rep},
\begin{equation}
\left(
\mathbb{M}_e\left(\overline{\hat{\brmG}}_e, \hat{\mathbf{S}}_e\right),
\boldsymbol{\gamma}_{2,e}
\right)_{\boldsymbol{\Psi}_e}
=
\left(
\overline{\hat{\brmG}}_e,
\mathbb{M}_e\left( \hat{\boldsymbol{\gamma}}_{2,e}, \overline{\hat{\mathbf{S}}}_e\right)
\right)_{\boldsymbol{\Psi}_e},
\end{equation}
where $\hat{\boldsymbol{\gamma}}_{2,e}$ denotes the function associated with the degrees of freedom $\boldsymbol{\gamma}_{2,e}$, expressed in the Lagrange basis $\varphi_l$, \cref{eq:lagrange_rep}.
\end{proposition}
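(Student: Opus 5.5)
The identity follows by writing both sides as $L^2(K_e)$ integrals and moving the Hermitian inner product around. Both $\mathbb{M}_e(\cdot,\cdot)$ vectors have only their pressure block non-zero, so the $\boldsymbol{\Psi}_e$ inner products reduce to sums over $l=1,\ldots,\ndof_e$ of pressure-block entries. I therefore write everything with the pressure block $\boldsymbol{\gamma}^{[p]}_{2,e}$ and the function $\hat\gamma^{[p]}_{2,e} = \sum_l \gamma^{[p]}_{2,e,l}\varphi_{e,l}$. The Lagrange basis functions $\varphi_{e,l}$ are real-valued, which will be used repeatedly.

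\textbf{Left-hand side.} Using the expression of $(\mathrm{M}_e)_l$ given just before the statement, I obtain
\[
\sum_l \Big(\int_{K_e} \hat{\mathbf{S}}_e\,\overline{\hat{\mathrm{G}}^{[p]}_e}\,\varphi_{e,l}\Big)\overline{\gamma^{[p]}_{2,e,l}}
=\int_{K_e}\hat{\mathbf{S}}_e\,\overline{\hat{\mathrm{G}}^{[p]}_e}\;\overline{\hat\gamma^{[p]}_{2,e}} .
\]
The step here is to pull the finite sum inside the integral. Because $\varphi_{e,l}$ is real, $\sum_l \overline{\gamma_l}\varphi_{e,l} = \overline{\hat\gamma}$.

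\textbf{Right-hand side.} The first argument is the coefficient vector $\overline{\mathrm{G}^{[p]}_e}$. The second is the vector with entries $\int_{K_e}\overline{\hat{\mathbf{S}}_e}\,\hat\gamma_{2,e}\,\varphi_{e,r}$, read with the same convention as in the definition of $\mathbb{M}_e$. Its conjugate is $\int_{K_e}\hat{\mathbf{S}}_e\,\overline{\hat\gamma_{2,e}}\,\varphi_{e,r}$, again using that $\varphi_{e,r}$ is real. Summing against $\overline{\mathrm{G}^{[p]}_{e,r}}$ and using $\sum_r \overline{\mathrm{G}_{e,r}}\varphi_{e,r}=\overline{\hat{\mathrm{G}}_e}$ gives
\[
\int_{K_e}\hat{\mathbf{S}}_e\,\overline{\hat{\mathrm{G}}^{[p]}_e}\,\overline{\hat\gamma^{[p]}_{2,e}} .
\]
This matches the left-hand side.

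The main point of care is bookkeeping of conjugations. Real-valuedness of $\hat{\mathbf{S}}_e$, i.e. $\overline{\hat{\mathbf{S}}}_e=\hat{\mathbf{S}}_e$, is what lets the conjugated covariance appearing in the second slot match the unconjugated one on the left. I also need to fix exactly how $\mathbb{M}_e(\hat{\boldsymbol{\gamma}}_{2,e},\cdot)$ is read: the first argument is the function that multiplies $\varphi_l$ inside the integral, as in \cref{eq:hdg-rhs-step2}. Everything else is linearity of the integral and the finite sums.
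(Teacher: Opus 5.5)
Your proof is correct and follows essentially the paper's route: both sides are expanded in the real Lagrange basis and reduced to the same quantity, $\int_{K_e}\hat{\mathbf{S}}_e\,\overline{\hat{\mathrm{G}}^{[p]}_e}\,\overline{\hat\gamma^{[p]}_{2,e}}$. The paper keeps this as the triple sum $\sum_{l,m,r}\mathbf{S}_m\overline{\mathrm{G}^{[p]}_r}\langle\varphi^{\mathbf{S}}_m\varphi_r,\varphi_l\rangle\overline{\gamma^{[p]}_{2,l}}$ and regroups it, rather than collapsing to a single integral as you do. One correction to your closing remark: your computation never uses $\overline{\hat{\mathbf{S}}}_e=\hat{\mathbf{S}}_e$, because the conjugation from the Hermitian product on the right already turns $\overline{\hat{\mathbf{S}}}_e$ back into $\hat{\mathbf{S}}_e$. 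The identity therefore also holds for a complex-valued covariance, and real-valuedness is not what makes the two sides match.
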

\begin{proof}
For clarity, we omit the element-wise subscript $e$ in the following expressions. We have,
\begin{equation}\begin{aligned}
    \left( 
        \mathbb{M}\big( \overline{\hat{\brmG}}, \hat{\mathbf{S}} \big),
        \boldsymbol{\gamma}_{2}
    \right) 
    &= \left( 
        \mathrm{M}\big( \overline{\hat{\mathcal{G}^{[p]}}}, \hat{\mathbf{S}} \big),
        \gamma^{[p]}_{2}
    \right) 
    =
    \sum_{l=1}^{\ndof_e}
    \left\langle
        \left( \sum_{m=1}^{n_e^{\mathbf{S}}} \mathbf{S}_m \varphi_m^{\mathbf{S}} \right)
        \left( \sum_{r=1}^{\ndof_e} \overline{\mathrm{G}^{[p]}_r} \, \varphi_r \right),
        \varphi_l
    \right\rangle
    \overline{\gamma}_{2,l}^{[p]} \\
    &=
    \sum_{l=1}^{\ndof_e}
    \sum_{m=1}^{n_e^{\mathbf{S}}}
    \sum_{r=1}^{\ndof_e} 
    \mathbf{S}_m \, \overline{\mathrm{G}^{[p]}_r}
    \left\langle
        \varphi_m^{\mathbf{S}} \varphi_r, \varphi_l
    \right\rangle
    \overline{\gamma}_{2,l}^{[p]} \\
    &=
    \sum_{r=1}^{\ndof_e}
    \overline{\mathrm{G}^{[p]}_r}
    \left\langle
        \varphi_r,
       \left( \sum_{m=1}^{n_e^{\mathbf{S}}} \overline{\mathbf{S}_m} \varphi_m^{\mathbf{S}} \right)
        \left( \sum_{l=1}^{\ndof_e} \overline{\gamma}_{2,l}^{[p]} \varphi_l \right)
    \right\rangle \\
    &=
    \left( 
        \overline{\hat{\brmG}}, \mathbb{M}( \hat{\boldsymbol{\gamma}}_2, \overline{\hat{\mathbf{S}}} )
    \right).
\end{aligned}\end{equation}

\end{proof}
% ===============================================
\subsubsection{Selection of the adjoint states}
% ===============================================

Let us denote by $(\tilde{\brmG}, \tilde{\Lambda}_G)$ the exact 
HDG solutions for the  Green's function problem~\cref{eq:HDG_green}, 
and by $(\tilde{\brmC}, \tilde{\Lambda}_C)$ 
the HDG solutions for the cross-correlation problem~\cref{eq:HDG_cc}, both 
computed with some model parameters $\tilde{\mathbf{m}}$. 
By definition, we have that, for any choice of Lagrange 
multipliers $\boldsymbol{\gamma}_1, \Gamma_1, \boldsymbol{\gamma}_2, \Gamma_2$,
\begin{equation}
  \mathcal{L}(\tilde{\bm}, \tilde{\brmG}, \tilde{\Lambda}_G, \tilde{\brmC}, \tilde{\Lambda}_C,
\boldsymbol{\gamma}_{1}, \Gamma_1, \boldsymbol{\gamma}_{2}, \Gamma_2)
= J(\tilde{\bm}).
\end{equation}
% From now on, every partial derivative of $\mathcal{L}$ with respect to 
% $\brmG$, $\Lambda_G$, $\brmC$, or $\Lambda_C$ is evaluated at 
% $\tilde{\brmG}$, $\tilde{\Lambda}_G$, $\tilde{\brmC}$, or $\tilde{\Lambda}_C$, 
% respectively.

Since the Lagrangian involves complex-valued quantities, 
we use Wirtinger calculus formalism, cf.~\cite{brandwood1983,koor2023,
kreutz2009,barucq2019}, which provides a systematic framework 
for differentiating real-valued functions of complex variables
for non-holomorphic quantity. This consists 
in treating $\brmG$ and $\overline{\brmG}$, as well as $\brmC$ and 
$\overline{\brmC}$, as independent variables. 
% Since $\mathcal{L}(\tilde{\bm}, \tilde{\brmG}, \tilde{\Lambda}_G, \tilde{\brmC}, 
% \tilde{\Lambda}_C) = J$
% for any choice of Lagrange multipliers, 
The gradient of $J$ with respect to $\bm$ follows from the chain rule as
\begin{equation}
\begin{aligned}
D_{\mathbf{m}}J
=
\Re \Big( \, &
\partial_{\mathbf{m}} \mathcal{L} 
\,+\, \partial_{\mathbf{G}}\mathcal{L} \, D_{\mathbf{m}}\mathbf{G} 
\,+\, \partial_{\overline{\mathbf{G}}}\mathcal{L} \, D_{\mathbf{m}}\overline{\mathbf{G}} 
\,+\, \partial_{\mathbf{C}}\mathcal{L} \, D_{\mathbf{m}}\mathbf{C} \\
&
\,+\, \partial_{\overline{\mathbf{C}}}\mathcal{L} \, D_{\mathbf{m}}\overline{\mathbf{C}} 
\,+\, \partial_{\Lambda_G}\mathcal{L} \, D_{\mathbf{m}}\Lambda_G
\,+\, \partial_{\Lambda_C}\mathcal{L} \, D_{\mathbf{m}}\Lambda_C
\, \Big),
\end{aligned}
\label{eq:lagrangian_deriv_wirtinger}
\end{equation}
where all functions are evaluated at
$(\tilde{\bm}, \tilde{\brmG}, \tilde{\Lambda}_G, \tilde{\brmC},
\tilde{\Lambda}_C)$, $D_\bm$ denotes the total differential with respect to $\bm$,
and $\partial_{\bullet}$ denotes the partial derivatives.
Note that the partial derivatives with respect 
to $\overline{\Lambda}_G$ and $\overline{\Lambda}_C$ vanish since, 
$\mathcal{L}$ does not depend on the conjugate of 
$\Lambda_G$ and $\Lambda_C$ in \cref{eq:Lagragian_def_loc}.

The adjoint-states 
% $(\gamma_1, \Gamma_1, \gamma_2, \Gamma_2)$ 
$\tilde{\boldsymbol{\gamma}}_1$, $\tilde{\Gamma}_1$, $\tilde{\boldsymbol{\gamma}}_2$, 
$\tilde{\Gamma}_2$
are selected such that the terms involving $D_{\bm}\tilde{\brmG}$, $D_{\bm}\tilde{\brmC}$, 
$D_{\bm}\tilde{\Lambda}_G$, and $D_{\bm}\tilde{\Lambda}_C$ in 
\cref{eq:lagrangian_deriv_wirtinger} vanish, in particular:
\begin{equation}
 \frac{\partial \mathcal{L}}{\partial \mathbf{G}} +
\frac{\partial \mathcal{L}}{\partial \overline{\mathbf{G}}}  = 0, \quad
\frac{\partial \mathcal{L}}{\partial \Lambda_G} = 0, \quad 
 \frac{\partial \mathcal{L}}{\partial \mathbf{C}} +
\frac{\partial \mathcal{L}}{\partial \overline{\mathbf{C}}} = 0, \quad
\frac{\partial \mathcal{L}}{\partial \Lambda_C} = 0.
\end{equation}

Computing the partial derivatives 
for every direction $\delta_{C,e} \in \boldsymbol{\Psi}_e$ and $\delta_\Lambda \in \boldsymbol{\Lambda}$:
\begin{subequations}
\begin{empheq}[left=\empheqlbrace]{align}
\dfrac{\partial \mathcal{L}}{\partial \brmC_e} \, \delta_{C,e} 
+ \dfrac{\partial \mathcal{L}}{\partial \overline{\brmC}_e}\delta_{C,e}
&= 
\frac{1}{2}\left(\mathfrak{R}_e\delta_{C,e}, \mathfrak{R}_e\tilde{\brmC}_e - \bd\right)
+
\frac{1}{2}\overline{\left(\mathfrak{R}_e\delta_{C,e}, \mathfrak{R}_e\tilde{\brmC}_e - \bd\right)}
\nonumber \\
&\quad +
\left(\mathbb{B}_e \delta_{C, e}, \mathcal{R}_e \Gamma_2\right)
+
\left(\mathbb{A}_e \delta_{C, e}, \boldsymbol{\gamma}_{2,e}\right) 
\nonumber \\
&= 
\Re \left( \mathfrak{R}_e\delta_{C,e}, \mathfrak{R}_e\tilde{\brmC}_e - \bd \right)   
+
\left(\mathbb{B}_e \delta_{C, e}, \mathcal{R}_e \Gamma_2\right)
+
\left(\mathbb{A}_e \delta_{C, e}, \boldsymbol{\gamma}_{2,e}\right),
\\[6pt]
\dfrac{\partial \mathcal{L}}{\partial \Lambda_C} \, \delta_\Lambda
&= \sum_e
\left( 
     \mathbb{L}_e \mathcal{R}_e \delta_\Lambda,
    \mathcal{R}_e\Gamma_2
\right)
+
\sum_e
\left(
    \mathbb{D}_e \mathcal{R}_e \delta_\Lambda,
    \boldsymbol{\gamma}_{2,e}
\right), 
\end{empheq}
\label{eq:source_adjoint}
\end{subequations}
where $\partial \mathcal{L} / \partial \brmC_e$ denotes the partial derivative 
with respect to the local component $\brmC_e$ on element $K_e$, treating all 
other $\brmC_{e'}$, $e' \neq e$, as fixed.
These conditions are satisfied for $(\tilde{\boldsymbol{\gamma}}_{2,e},\tilde{\Gamma}_2)$ solutions to
\begin{subequations}\label{eq:HDG_adjoint_C}
\begin{empheq}[left=\empheqlbrace]{align}
    \mathbb{A}_e^* \tilde{\boldsymbol{\gamma}_{2,e}}
    + \mathbb{B}_e^* \mathcal{R}_e \tilde{\Gamma_2}
&=
- \mathfrak{R}^*_e
\left(
    \mathfrak{R}_e \tilde{\brmC_e} - \bd
\right), \qquad \forall K_e \in \mathcal{T}_h,
\label{eq:source_adjoint_system_a}
\\[6pt]
\sum_e 
\left(
    \mathcal{R}_e^\mathrm{t} \mathbb{D}_e^* \tilde{\boldsymbol{\gamma}_{2,e}}
    + \mathcal{R}_e^\mathrm{t}\mathbb{L}_e^* \mathcal{R}_e \tilde{\Gamma_2}
\right)
&=
0.
\label{eq:source_adjoint_system_b}
\end{empheq}
\end{subequations}
The real part operator from the cost functional is removed since the condition must hold for arbitrary complex variations.
We recognize an adjoint state similar to the FWI problem, 
see \cite[Eq. (52)]{Faucher2020}, in which the residuals are used 
as right-hand sides.

For the derivatives $\partial_{\brmG_e}\mathcal{L}$ and 
$\partial_{\Lambda_G}\mathcal{L}$ in directions 
$\delta_{G,e}$ and $\delta_{\Lambda}$, we have,
\begin{equation}
    \begin{array}{@{\,}l@{\quad}l@{\quad}l@{\,}l}
        \dfrac{\partial \mathcal{L}_{G} }{\partial \brmG_e} \delta_{G,e}
        &= \displaystyle\sum_e  ( \mathbb{A}_e \delta_{G,e} , \boldsymbol{\gamma}_1  ) 
        + \displaystyle\sum_e (  \mathbb{B}_e \delta_{G,e}, \mathcal{R}_e \Gamma_1  ), 
        & \dfrac{\partial \mathcal{L}_G}{\partial\overline{\brmG}} \delta_{G,e} = 0, & 
        \qquad \forall K_e \in \mathcal{T}_h,  \\[15pt]
        \dfrac{\partial \mathcal{L}_G }{\partial \Lambda_G} \delta_\Lambda 
        &= \displaystyle\sum_e  ( \mathbb{D}_e \mathcal{R}_e \delta_\Lambda, \boldsymbol{\gamma}_1  ) 
        + \displaystyle\sum_e  (  \mathbb{L}_e \mathcal{R}_e \delta_\Lambda, \mathcal{R}_e \Gamma_1  ), 
        &  \dfrac{\partial \mathcal{L}_C }{\partial \Lambda_G} \delta_\Lambda = 0,
    \end{array}
\end{equation}
The conjugate of $\mathbf{G}_e$ appear linearly in the RHS matrix $\mathbb{M}_e$, therefore, using \cref{prop:source_mat}:
\begin{equation}
\dfrac{\partial \mathcal{L}_C}{\partial \overline{\mathbf{G}}_e} \, \delta_{G,e} 
= \left ( \mathbb{M}_e(\delta_{G,e}, \mathbf{S}_e) , \boldsymbol{\gamma}_{2, e} \right )= \left ( \delta_{G,e} , \mathbb{M}_e( \boldsymbol{\gamma}_{2, e} , \overline{\brmS}_e ) \right ), 
\qquad 
\dfrac{\partial \mathcal{L}_C}{\partial \mathbf{G}_e} \, \delta_{G,e} = 0.
\end{equation}
The second adjoint state, 
$(\tilde{\boldsymbol{\gamma}}_{1,e}, \tilde{\Gamma}_1)$ is thus selected as 
solution to:
\begin{subequations}\label{eq:HDG_adjoin_model}
    \begin{empheq}[left=\empheqlbrace]{align}
     \mathbb{A}_e^* \tilde{\boldsymbol{\gamma}}_{1, e} + \mathbb{B}_{e}^* \mathcal{R}_e \tilde{\Gamma}_{1} &= \mathbb{M}_e( \tilde{\boldsymbol{\gamma}}_{2, e} , \overline{\brmS}_e), & \forall K_e \in \mathcal{T}_h. \label{eq:HDG_adjoin_model_loc}
      \\
    \sum_e \left ( \mathcal{R}_e^\mathrm{t} \mathbb{D}_e ^*\tilde{\boldsymbol{\gamma}}_{1, e} + \mathcal{R}_e^\mathrm{t} \mathbb{L}_e^* \mathcal{R}_e \tilde{\Gamma}_{1} \right )  &= 0,    
   \end{empheq}
\end{subequations}

Both adjoint-state systems \cref{eq:HDG_adjoint_C,eq:HDG_adjoin_model} have 
the same structure as the forward HDG problem \cref{eq:HDG_mat_full}, however 
with complex conjugation of the local matrices. 
Nevertheless, they result in the global system in terms of 
the adjoint of the forward global matrix, 
$\mathbb{K}^*$, cf.~\cite{Faucher2020}. Specifically, 
the adjoint states under the HDG discretization are obtained as follows.
\begin{itemize}
    \item The first adjoint state 
    $(\tilde{\boldsymbol{\gamma}}_{2,e},\widetilde{\Gamma}_2)$ is obtained 
    by solving the global problem,
    \begin{equation}
    \mathbb{K}^* \tilde{\Gamma}_2 = \mathbb{H}_2,
    \qquad
    \mathbb{H}_2 = -\sum_e \mathcal{R}_e^\mathrm{t} 
    \left( \mathbb{D}_e^* \mathbb{A}_e^{-*} \mathfrak{R}_e^*
    \left( \mathfrak{R}_e \tilde{\brmC}_e - \bd \right) \right),
    \end{equation}
and then $\tilde{\boldsymbol{\gamma}}_{2,e}$ is given from 
$\tilde{\Gamma}_2$ via \cref{eq:source_adjoint_system_a}.
    \item The second adjoint state is obtained by solving the global problem
    \begin{equation}
    \mathbb{K}^* \tilde{\Gamma}_1 = \mathbb{H}_1,
    \qquad
    \mathbb{H}_1 = -\sum_e \mathcal{R}_e^\mathrm{t} 
    \left( \mathbb{D}_e^* \mathbb{A}_e^{-*} 
    \mathbb{M}_e( \tilde{\boldsymbol{\gamma}}_{2,e}, \overline{\brmS}_e) \right),
    \end{equation}
and the local solution $\tilde{\boldsymbol{\gamma}}_{1,e}$ 
is obtained from $\tilde{\Gamma}_1$ using \cref{eq:HDG_adjoin_model_loc}.
\end{itemize}

% ===============================================
\subsubsection{Expression of the gradients}
% ===============================================

After computing the HDG solutions $(\tilde{\brmG}, \tilde{\Lambda}_G)$ 
of~\cref{eq:HDG_green}, and $(\tilde{\brmC}, \tilde{\Lambda}_C)$ 
for~\cref{eq:HDG_cc}, the two adjoint states ($\tilde{\boldsymbol{\gamma}}_1$, $\tilde{\Gamma}_1$)
and $(\tilde{\boldsymbol{\gamma}}_2$, $\tilde{\Gamma}_2)$ are obtained with 
\cref{eq:HDG_adjoint_C,eq:HDG_adjoin_model}.
Then, the gradient of $J$ in \cref{eq:lagrangian_deriv_wirtinger} 
reduces to the partial derivative of the Lagrangian with respect to $\bm$:
\begin{equation}
    D_{\bm} J = \Re \left( \partial_{\bm} \mathcal{L} \right).
\end{equation}
The precise derivation depends on the model parameter inverted,
i.e., the source covariance term $\mathbf{S}$ 
and the background parameters  $(\mathfrak{c}, \uprho)$. 
% as they enter the Lagrangian through different terms, $\mathbf{S}$ appears only in $\mathbb{M}_e$, while 
% $\mathfrak{c}$ and $\uprho$ appear in $\mathbb{A}_e$ and $\mathbb{L}_e$.
Namely, $\mathbf{S}$ only appears in the term $\mathbb{M}_e$, while 
$\mathfrak{c}$ and $\uprho$ appear in the local matrices 
$\mathbb{A}_e$ and $\mathbb{L}_e$. 
For any model parameter $m\in \bm$, the gradient writes as
\begin{equation}\label{equation:gradient-full}
\begin{aligned}
    D_{m} J \cdot \delta_{m}
  & \,=\, \Re \left( 
        \partial_{m} \mathcal{L}_G \cdot \delta_{m} 
        + \partial_{m} \mathcal{L}_C \cdot \delta_{m} 
      \right) \\
  & \,=\, \Re \Bigg( \sum_{K_e \in \mathcal{T}_h} 
    \left( 
    \partial_{m} \mathbb{L}_e \cdot \delta_{m} \, 
    \mathcal{R}_e \tilde{\Lambda}_G,
    \mathcal{R}_e \tilde{\Gamma}_1
    \right)_{\boldsymbol{\Lambda}_e}
    +\,
    \left( 
    \partial_{m} \mathbb{A}_e \cdot \delta_{m} \, \tilde{\brmG}_e,
    \tilde{\boldsymbol{\gamma}}_{1,e}
    \right)_{\boldsymbol{\Psi}_e} \\
  & \phantom{\,=\, \sum_{K_e \in \mathcal{T}_h}} +
    \left( 
    \partial_{m} \mathbb{L}_e \cdot \delta_{m} \, 
    \mathcal{R}_e \tilde{\Lambda}_C,
    \mathcal{R}_e \tilde{\Gamma}_2
    \right)_{\boldsymbol{\Lambda}_e}
    +\,
    \left( 
    \partial_{m} \mathbb{A}_e \cdot \delta_{m} \, \tilde{\brmC}_e,
    \tilde{\boldsymbol{\gamma}}_{2,e}
    \right)_{\boldsymbol{\Psi}_e} \\
  & \phantom{\,=\, \sum_{K_e \in \mathcal{T}_h}}
    -\,
    \left( 
        \mathbb{M}_e\left(\overline{\tilde{\brmG}}_e, \delta_{m}\right), 
        \tilde{\boldsymbol{\gamma}}_{2,e} 
   \right)_{\boldsymbol{\Psi}_e} \Bigg)\,.
\end{aligned} \end{equation}

\paragraph{Derivative for $\mathbf{S}$}

We first consider the derivative with respect to the
source covariance. In \cref{equation:gradient-full}, only the last term
depends on the source covariance via $\mathbb{M}_e$, hence we have,
\begin{equation}
    D_{\brmS}J \cdot \delta
    = \Re \left( \partial_{\brmS} \mathcal{L} \cdot \delta \right)
    = -\Re \left( \sum_{K_e \in \mathcal{T}_h} \left(
        \mathbb{M}_e\left(\overline{\hat{\tilde{\brmG}}}_e, \hat{\delta_{e}}\right),
        \tilde{\boldsymbol{\gamma}}_{2,e}
      \right)_{\boldsymbol{\Psi}_e} \right),
\end{equation}
where the direction $\delta$ is an arbitrary perturbation of the source covariance
$\brmS$, and $\delta_e$ denotes the vector of coefficients of this perturbation
restricted to the element $K_e$, with $\hat{\delta_e}$ the corresponding function
in the local approximation space(in the same way that
$\hat{\tilde{\brmG}}_e$ denotes the function associated
with the coefficients $\tilde{\brmG}_e$).

Therefore, to compute the gradient with respect to
the source covariance, it is only necessary to have 
the adjoint state $(\tilde{\boldsymbol{\gamma}}_{2}, \tilde{\Gamma}_2)$.

\paragraph{Derivative for $\mathfrak{c}$ and $\uprho$}

For the derivatives with respect to the wave speed $\mathfrak{c}$
and density $\uprho$, the first four terms in \cref{equation:gradient-full} remains. Here, both adjoint states are required, and one further
has to derive the local HDG matrices $\mathbb{A}$ and $\mathbb{L}$, similarly
to what is done in the traditional adjoint-state method for FWI, \cite{Faucher2020}. 
For the wave speed $\mathfrak{c}$, using the 
direction $\delta_{\mathfrak{c}} = (\delta_{\mathfrak{c}_e})_{K_e \in \mathcal{T}_h}$,
we have
\begin{equation}
    D_{\mathfrak{c}} J \cdot \delta_{\mathfrak{c}} 
    = \Re \left( 
        \partial_{\mathfrak{c}} \mathcal{L}_G \cdot \delta_{\mathfrak{c}} 
        + \partial_{\mathfrak{c}} \mathcal{L}_C \cdot \delta_{\mathfrak{c}} 
      \right),
\end{equation}
with
\begin{subequations}
\begin{align}
\partial_{\mathfrak{c}} \mathcal{L}_G \cdot \delta_{\mathfrak{c}}
&= \sum_{K_e \in \mathcal{T}_h}
\left(
\partial_{\mathfrak{c}_e} \mathbb{L}_e \cdot \delta_{\mathfrak{c}_e} \, 
\mathcal{R}_e \tilde{\Lambda}_G,
\mathcal{R}_e \tilde{\Gamma}_1
\right)_{\boldsymbol{\Lambda}_e}
+
\left(
\partial_{\mathfrak{c}_e} \mathbb{A}_e \cdot \delta_{\mathfrak{c}_e} \, 
\tilde{\brmG}_e,
\tilde{\boldsymbol{\gamma}}_{1,e}
\right)_{\boldsymbol{\Psi}_e},
\\
\partial_{\mathfrak{c}} \mathcal{L}_C \cdot \delta_{\mathfrak{c}}
&= \sum_{K_e \in \mathcal{T}_h}
\left( 
\partial_{\mathfrak{c}_e} \mathbb{L}_e \cdot \delta_{\mathfrak{c}_e} \, 
\mathcal{R}_e \tilde{\Lambda}_C,
\mathcal{R}_e \tilde{\Gamma}_2
\right)_{\boldsymbol{\Lambda}_e}
+
\left( 
\partial_{\mathfrak{c}_e} \mathbb{A}_e \cdot \delta_{\mathfrak{c}_e} \, 
\tilde{\brmC}_e,
\tilde{\boldsymbol{\gamma}}_{2,e}
\right)_{\boldsymbol{\Psi}_e},
\end{align}
\end{subequations}
and similarly for $\uprho$. The partial derivatives of the 
HDG matrices $\mathbb{A}_e$ and $\mathbb{L}_e$ are given in 
\cite{Faucher2020}.
We note that the gradient with respect to $\mathbf{S}$ only requires the 
adjoint state $(\tilde{\boldsymbol{\gamma}}_2, \tilde{\Gamma}_2)$. Since $\mathbf{S}$ does 
not appear in the Green's function wave problem \cref{eq:HDG_green}, the latter does not 
need to be recomputed when $\mathbf{S}$ is updated. Furthermore, the cross-correlation depends 
linearly on $\mathbf{S}$, implying that its reconstruction corresponds to a linear inverse 
problem for which alternative strategies can be employed.

\subsection{Minimization strategy}
\label{subsection:minimization-strategy}

The iterative minimization follows a quasi-Newton scheme, where
the model parameters are updated at each iteration $k$ according to
\begin{equation}
    \bm_{k+1} = \bm_{k} - \alpha_k \,\,\mathbf{s}_k \, ,
\end{equation}
where $\mathbf{s}_k$ denotes the search direction, computed from the
gradient, and $\alpha_k$ is the step length, \cite{nocedal2006}.
Full Newton (and Gauss-Newton) methods require the explicit formation
and regularization of the Hessian, which is generally computationally
prohibitive for large-scale applications \cite{Virieux2009,metivier2017,faucher2017}. 
Instead, we employ a quasi-Newton algorithm that avoids explicit Hessian formation while using only gradient information to construct the search direction.
In our work, the search direction is obtained with the 
L-BFGS algorithm, which constructs 
an approximation of the inverse Hessian through a 
two-loop recursion using (first-order) information from previous 
iterations \cite{nocedal2006}. The step length is 
computed using a linesearch algorithm \cite{Chavent2010,faucher2017}.

The method can be applied to both the background model parameters
$(\mathfrak{c},\,\uprho)$ and the source covariance $\mathbf{S}$.
Nevertheless, as mentioned above, the inversion with respect to
the source covariance is linear. When only $\mathbf{S}$ is
updated (i.e., $\mathfrak{c}$ and $\uprho$ are kept fixed), the
Green's function $\tilde{\brmG}$ remains unchanged with the
iterations. Consequently, $J$ reduces to a quadratic functional
with a constant Hessian, allowing the use of dedicated approaches
such as the conjugate gradient method \cite{nocedal2006}.

% --------------------------------------------------------------------------
\section{Numerical experiments}
\label{section:numerical}
% --------------------------------------------------------------------------
\newcommand{\misfitfwi}{J_{\mathrm{fwi}}}
\newcommand{\misfitcc} {J_{\mathrm{cc}}}
In this section, we perform numerical experiments of
nonlinear inversion using synthetic data, comparing
the use of expected value of the cross-correlations
with that of the direct wavefield. The first approach
corresponds to the traditional full waveform inversion
(FWI), which relies on controlled sources to acquire
the data, e.g., \cite{Virieux2009,faucher2017}. The
second approach, that we refer to as Full Cross-Correlation
Waveform Inversion (FCCWI), instead relies on the
expected cross-correlations of ambient signals,
following the formulation introduced in
\cref{section:inversion}.
In both cases, we follow the iterative minimization
strategy described in \cref{subsection:minimization-strategy}, 
while
working with the respective data type. The associated
misfit functionals are denoted by $\misfitfwi$ and
$\misfitcc$ for FWI and FCCWI, respectively. We focus
on the pressure fields, such that we have,
\begin{equation}\label{equation:misfits}
\begin{aligned}
  \misfitfwi(c,\rho)   & \,=\,
  \dfrac{1}{2}\sum_{\bx_1 \in \rcvs} \sum_{\bx_2 \in \srcs} \,
    \left| \mathcal{G}^{p}_{\bx_2}(\bx_1) -
   % d_{\mathrm{obs}}^{\mathrm{fwi}}(\bx_1, \bx_2) \right|^2, \\
   d_{\bx_2}^{\mathrm{G}}(\bx_1) \right|^2, \\
  \misfitcc(c,\rho,\mathcal{S}) & \,=\,
  \dfrac{1}{2}\sum_{\bx_1 \in \rcvs} \sum_{\bx_2 \in \srcs} \,
    \left| \mathcal{C}^{p}_{\bx_2}(\bx_1) -
    % d_{\mathrm{obs}}^{\mathrm{CC}}(\bx_1, \bx_2) \right|^2, \\
    d_{\bx_2}^{\mathrm{C}}(\bx_1) \right|^2, 
\end{aligned}\end{equation}
Here, $\mathcal{G}^{p}_{\bx_2}(\bx_1)$ denotes the pressure component of the 
simulated Green's function at receiver position $\bx_1$ for a source located 
at $\bx_2$, obtained from the resolution of \cref{eq:HDG_green}, and 
% $d_{\mathrm{obs}}^{\mathrm{fwi}}(\bx_1, \bx_2)$ 
$d_{\bx_2}^{\mathrm{G}}(\bx_1)$
denotes the corresponding 
observed data used in the classical FWI setting. Similarly, 
$\mathcal{C}^{p}_{\bx_2}(\bx_1)$ denotes the pressure component of the 
simulated cross-correlation expected value, obtained from \cref{eq:HDG_cc}, and 
% $d_{\mathrm{obs}}^{\mathrm{CC}}(\bx_1, \bx_2)$ 
$d_{\bx_2}^{\mathrm{C}}(\bx_1)$
the corresponding observed 
cross-correlation data. The misfit functional $\misfitfwi$ therefore 
corresponds to the standard FWI formulation based on direct wavefield 
measurements, while $\misfitcc$ corresponds to the cross-correlation-based 
formulation introduced in \cref{equation:misfit}, restricted here to the 
pressure component for simplicity. 
We carry out three synthetic experiments:
\begin{enumerate}
\item In \cref{subsection:toy-problem}, we 
      examine a toy problem with data available 
      on all boundaries, using a background 
      medium containing distinct objects with 
      contrasting properties.

\item In \cref{subsection:ot}, we consider a 
      seismic configuration with data available 
      only near the surface, based on the 
      Overthrust background model of \cite{Aminzadeh1994seg}.

\item In \cref{subsection:3d}, we demonstrate 
      the feasibility of 3D reconstructions and 
      investigate the associated computational cost.
\end{enumerate}

In all cases, the minimization problem is solved using the L-BFGS algorithm 
\cite{nocedal2006}, following the adjoint-state gradient computation 
described in \cref{section:inversion}.

% -------------------------------
\subsection{Experiment 1: Reconstruction of contrasting objects with full data}
\label{subsection:toy-problem}
% -------------------------------

We first consider an experiment with two objects 
(a square and a disk) embedded in a constant 
background medium. Each object has a different 
wave speed, as shown in \cref{figure:toy-true_cp}. 
The domain is taken to be the unit square, with the 
wave speed varying from \num{1} in the background 
to \num{2} within the square object. The density is 
set to a constant value of $\rho = 1$ throughout the 
entire domain. The source covariance $\mathcal{S}$ 
is considered with a smooth pattern, pictured see \cref{figure:toy-true_sc}.

% ============================
\begin{figure}[ht!] \centering
  \subfloat[Wave speed model.]
  {\includegraphics[scale=1]{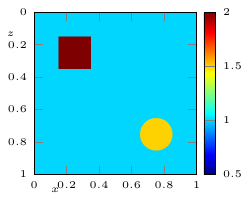}
   \label{figure:toy-true_cp}} \hspace*{3em}
  \subfloat[Source covariance model.]
  {\includegraphics[scale=1]{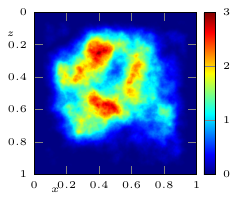}
   \label{figure:toy-true_sc}}
  \caption{Experiment on the unit-square with the wave speed
           composed of two inclusions. The density is taken
           constant. 
           The data for inversion are acquired near the 
           four boundaries.}
  \label{figure:toy-true}
\end{figure}

For the synthetic data set, absorbing boundary conditions are 
imposed on all four sides of the domain. A total of \num{149} 
receivers are considered ($\rcvs$ in \cref{equation:misfits}) 
and are distributed along the boundaries. For the set $\srcs$, 
we select a subset of \num{16} positions, uniformly 
distributed around the domain. We also add \num{10}\si{\decibel} 
noise to the synthetic data to avoid inverse crime. 
The reconstruction starts from a constant wave speed 
background, and we use integer frequencies from \num{2} to 
\num{10}\si{\Hz}. The iterative minimization of the misfit 
functionals in \cref{equation:misfits} is performed with 
30 iterations per frequency. The results obtained using 
$\misfitfwi$ and $\misfitcc$ are shown in 
\cref{figure:toy-reconstruction-cp}. For the 
cross-correlation-based inversion, the source covariance is assumed to be unknown 
and remains fixed to a Gaussian profile.

\begin{figure}[ht!] \centering
  \subfloat[FWI reconstruction using
            $\misfitfwi$.]
  {\makebox[15em]{\includegraphics[scale=1]{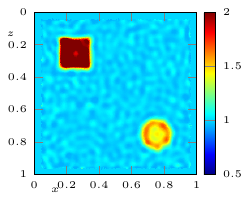}}}
  \hspace*{3em}
  \subfloat[FCCWI reconstruction using $\misfitcc$.]
  {\makebox[15em]{\includegraphics[scale=1]{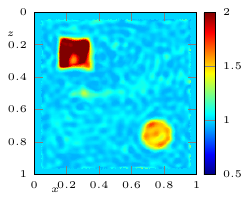}}
   \label{figure:toy-reconstruction-cp_cc}}
  \caption{Wave speed reconstructions 
         using either the direct wavefield data (FWI) 
         or using cross-correlation data (FCCWI). 
         Inversion starts from a constant background.
         The source covariance is not inverted and remains set
         to an initial guess which consists of a Gaussian
         profile.}
  \label{figure:toy-reconstruction-cp}
\end{figure}

We observe that the two reconstruction setups yield relatively 
similar results. The positions and contrasts of the two objects 
are well recovered, although the FWI approach appears to be 
slightly more accurate, for instance in the lower part of the 
square object. We also notice the presence of background 
oscillations, which are slightly stronger with FCCWI. 
It is important to emphasize that, despite the source covariance 
being unknown during the inversion, the wave speed reconstruction 
obtained using cross-correlation is not significantly degraded. 
We now proceed to reconstruct the source covariance in the 
cross-correlation framework. In \cref{figure:toy-reconstruction-scov}, 
we compare the reconstructions obtained using different wave speed 
models: first, a homogeneous model corresponding to the constant 
background, and second, the reconstructed model obtained in 
\cref{figure:toy-reconstruction-cp_cc}.

% =============================
\begin{figure}[ht!] \centering
  \subfloat[Reconstruction using homogeneous background wave speed.]
  {\makebox[15em][c]{\includegraphics[scale=1]{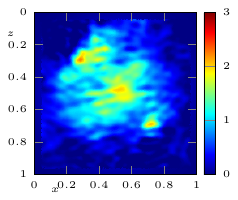}}}
  \hspace*{3em}
  \subfloat[Reconstruction using the reconstructed
            wave speed in \cref{figure:toy-reconstruction-cp_cc}.]
  {\makebox[15em][c]{\includegraphics[scale=1]{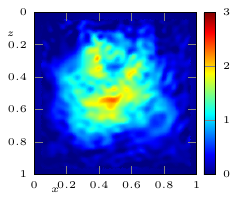}}}
  \caption{Source covariance reconstruction depending on the 
           (fixed) wave speed models used during the iterations.}
  \label{figure:toy-reconstruction-scov}
\end{figure}

We observe that the reconstruction of the source covariance 
does not perform as well as that of the wave speed. While the 
overall shape of the patterns is approximately recovered, 
the magnitude remains lower than the true values shown in 
\cref{figure:toy-true}. The wave speed model used in the 
source covariance reconstruction has an impact on 
the results: using the wave speed obtained from the inversion 
in \cref{figure:toy-reconstruction-cp} leads to improved accuracy. 
This suggests that a two-stage procedure should be adopted. 
First, the wave speed is reconstructed, as it is less sensitive 
to the unknown source covariance. Then, in a second stage, the 
source covariance can be recovered using the estimated wave 
speed model.

% -------------------------------
\subsection{Experiment 2: Seismic configuration with partial data}
\label{subsection:ot}
% -------------------------------

We consider a configuration representative of seismic imaging, 
in which only partial data are available at the surface. 
We use the Overthrust (OT) seismic model \cite{Aminzadeh1994seg}, 
which provides both the wave speed and density backgrounds
(see \cref{figure:ot-true}). The computational domain has size 
\num{20}$\times$\num{4.65}\si{\kilo\meter\squared}, and the background 
consists of layers with varying material properties. 
The wave speed varies strongly, approximately by a 
factor of 2.5 between the surface and the deepest 
region (from 2.4\si{\km\per\second} to 6\si{\km\per\second}). 
The synthetic source covariance (see \cref{figure:ot-true_scov}) is constructed 
as a smooth background field with a mild imprint of 
the layer structures.

\begin{figure}[ht!] \centering
  \subfloat[Wave speed.]
  {\includegraphics[scale=1]{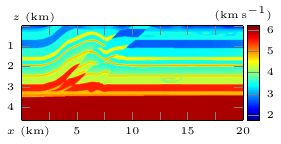}} \hfill
  \subfloat[Density.]
  {\includegraphics[scale=1]{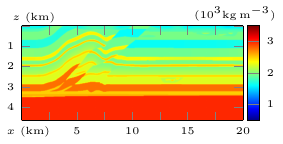}} \hfill
  \subfloat[Source covariance.]
  {\includegraphics[scale=1]{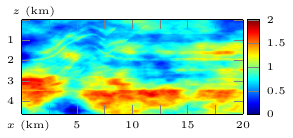}
   \label{figure:ot-true_scov}}
  \caption{The seismic model OT used for inversion. The domain
           has size \num{20}$\times$\num{4.65}\si{\kilo\meter\squared}.
           The data for inversion are only acquired 
           near the surface.}
  \label{figure:ot-true}
\end{figure}

For the numerical configuration, a free-surface boundary 
condition (zero pressure) is imposed at $z=0$, while absorbing 
boundary conditions are applied on the other sides of the domain. 
The synthetic data are generated using acquisition devices located 
near the surface only: we consider \num{399} receivers for 
$\rcvs$ in \cref{equation:misfits}, 
uniformly distributed every 50\si{\meter} at a depth of \num{50}\si{\meter}. 
Only half of these positions are used for $\srcs$ 
in \cref{equation:misfits}: these correspond to the source
positions in the traditional FWI setting, 
and to one of the cross-correlation positions 
in \cref{algorithm:forward-cc-hdg}. 
The synthetic data are generated with the models of 
\cref{figure:ot-true}, and white noise is added to 
avoid inverse crime.
For the reconstruction, we start from initial models that do not 
contain any information about the layered structure, and instead 
consist of a one-dimensional variation with depth, as shown in 
\cref{figure:ot-start}.

\begin{figure}[ht!] \centering
  \subfloat[Wave speed.]
  {\includegraphics[scale=1]{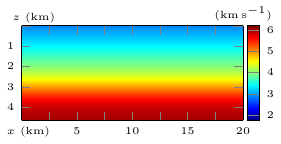}} \hfill
  \subfloat[Density.]
  {\includegraphics[scale=1]{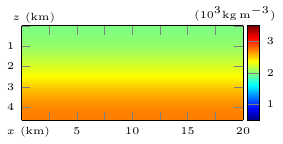}} \hfill
  \subfloat[Source covariance.]
  {\includegraphics[scale=1]{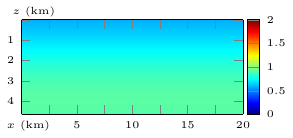}}
  \caption{Initial guesses for the reconstruction of the 
           OT model \cref{figure:ot-true}. They consist
           in one-dimensional variation in depth.}
  \label{figure:ot-start}
\end{figure}

% -----------------------------------
\subsubsection{Wave speed reconstruction}
% -----------------------------------

We begin by investigating the reconstruction 
of the wave speed, while keeping the other 
parameters (density and source covariance) 
fixed at their initial values shown 
in \cref{figure:ot-start}, i.e., they remain
unknown but are not inverted. 
The reconstruction is performed using data in 
the frequency range from 2 to 10\si{\Hz}. 
These frequencies are processed sequentially, 
with 30 minimization iterations carried out 
for each frequency, and the reconstruction 
obtained at a given frequency used to initialize 
the inversion at the next. The resulting 
reconstructions obtained using $\misfitfwi$ 
and $\misfitcc$ are shown in \cref{figure:ot-reconstruction-cp}.

\begin{figure}[ht!] \centering
  \subfloat[FWI reconstruction using $\misfitfwi$.]
  {\makebox[14em][c]{\includegraphics[scale=1]{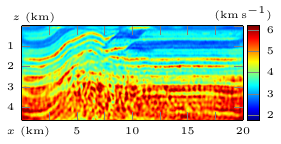}}} \hspace*{1em}
  \subfloat[FCCWI reconstruction using $\misfitcc$.]
  {\makebox[22em][c]{\includegraphics[scale=1]{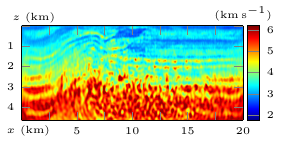}}
  \label{figure:ot-reconstruction-cp_cc}}
\caption{Wave speed reconstructions of the OT model 
         using either the direct wavefield data (FWI) 
         or using cross-correlation data (FCCWI). 
         The initial models for inversion are shown 
         in \cref{figure:ot-start}. The density and 
         source covariance are not inverted and are 
         kept fixed at their initial guess values.}
  \label{figure:ot-reconstruction-cp}
\end{figure}

The FWI reconstruction, which relies on data from
controlled sources, provides a better 
resolution than the cross-correlation-based inversion
with FCCWI. 
In the later case, the unknown source
covariance increases the complexity of the
inversion and can lead to a loss of accuracy.
Nevertheless, the different layers of the OT
model, along with the corresponding material
properties, are reasonably well recovered in the
reconstruction. With FWI, these layers are more
clearly resolved, particularly near the surface,
while in both approaches the deepest structures
remain more difficult to reconstruct.
This experiment highlights that inversion based on
cross-correlation in seismic imaging is inherently
more difficult due to the unknown source covariance,
whereas FWI alleviates this uncertainty, albeit at
the cost of a more demanding acquisition campaign.

% -----------------------------------------------
\subsubsection{Source covariance reconstruction}
% -----------------------------------------------

We now reconstruct the source covariance in the
cross-correlation inversion. The wave speed and 
density are kept fixed, and we compare in 
\cref{figure:ot-reconstruction-scov} using the 
starting wave speed (\cref{figure:ot-start}) or 
the reconstruction obtained in the previous stage,
see \cref{figure:ot-reconstruction-cp_cc}.
The frequency range for the data sequentially varies 
from 2 to 10\si{\Hz} and we perform 30 iterations per 
frequency.

\begin{figure}[ht!] \centering
  \subfloat[Reconstruction using the starting 
            wave speed in \cref{figure:ot-start}.]
  {\makebox[16em][c]{\includegraphics[scale=1]{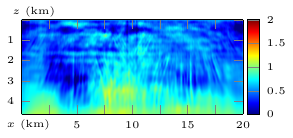}}} \hspace*{2em}
  \subfloat[Reconstruction using the reconstructed
            wave speed in \cref{figure:ot-reconstruction-cp_cc}.]
  {\makebox[16em][c]{\includegraphics[scale=1]{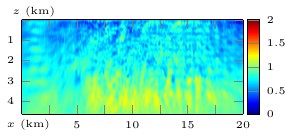}}}
\caption{Source covariance reconstruction for 
         the OT model starting from the initial 
         models shown in \cref{figure:ot-start}.}
  \label{figure:ot-reconstruction-scov}
\end{figure}

We observe that both reconstructions fail to adequately
capture the main features of the source covariance.
Using the initial wave speed further leads to a
low-amplitude reconstruction of the source covariance
across the domain. The reconstruction obtained using
the wave speed recovered from the first-stage inversion
yields a slightly improved magnitude, but the underlying
structures remain unresolved.
This test case highlights the difficulty of recovering the
source covariance in the seismic setting using an $l2$-norm
for the misfit functional, as defined in \cref{equation:misfits}.
This can be expected, since the $l2$-norm is known to be an 
indicator of phase discrepancy but does not necessarily 
capture amplitude (\cite{Virieux2009,faucher2017}).
Consequently, this misfit is well-suited for reconstructing the
wave speed, which primarily affects the phase and can be recovered
relatively accurately in \cref{figure:ot-reconstruction-cp}, 
but it is less effective for the source covariance,
which predominantly influences the amplitude. 
For the same reason, reconstructing the density is more 
difficult. 
% in this context, cf.~\cite{Virieux2009,faucher2017} and the references therein. 
To improve the reconstruction of the source covariance,
alternative misfit functionals need to be explored, this 
is part of ongoing work.

% -------------------------------
\subsection{Experiment 3: 3D test-case with high-contrast inclusions}
\label{subsection:3d}
% -------------------------------

We consider a three-dimensional case to assess the feasibility 
of the proposed approach in a more realistic setting and to 
evaluate its computational cost. 
The domain has dimensions \num{2.5}$\times$\num{1.5}$\times$\num{1}\si{\kilo\meter\cubed}, and we consider a free-surface condition
at the surface ($z=0$) and absorbing boundary conditions elsewhere.
The wave speed model contains several high-contrast inclusions and
is shown \cref{figure:3d:cp-true-start}. 
It ranges from approximately \num{1.5}\si{\km\per\second} near surface 
to \num{4.5}\si{\km\per\second} within the inclusions. 
For the source covariance profile, we consider a smoothing of the 
wave speed, see \cref{figure:3d:scov-true}, while the density is 
kept constant and equal to $1$ throughout this experiment.
Synthetic data are generated using receivers located near the 
surface at a depth of $50$\si{\meter}. The receivers are arranged 
on a structured grid with a spacing of $50$\si{\meter}. For the 
active-source configuration, as well as for the set $\srcs$ used 
in the cross-correlation inversion, every second receiver in each 
direction is selected as a source.

\begin{figure}[ht!] \centering
  \subfloat[True wave speed model.] 
           {\includegraphics[scale=0.55]{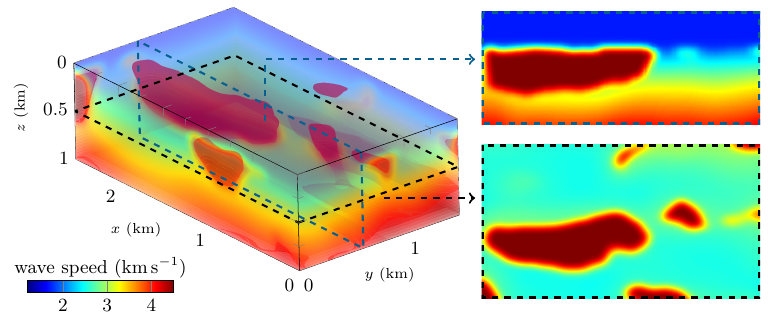}}
  \subfloat[True source covariance model.]
           {\includegraphics[scale=0.55]{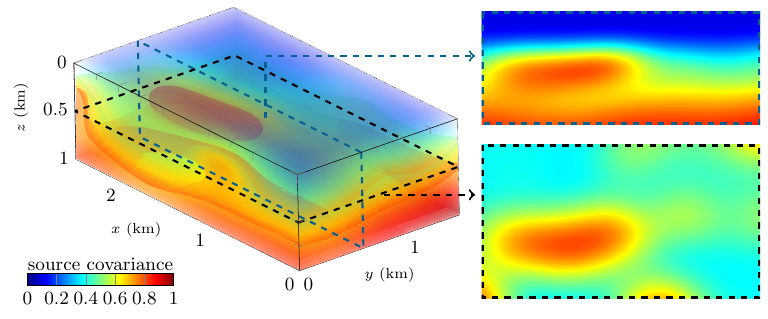}
            \label{figure:3d:scov-true}}

  \subfloat[Starting wave speed model for inversion, the starting 
            source covariance has similar pattern but rescaled between 0 and 1.]
  {\makebox[28em][c]{\includegraphics[scale=0.55]{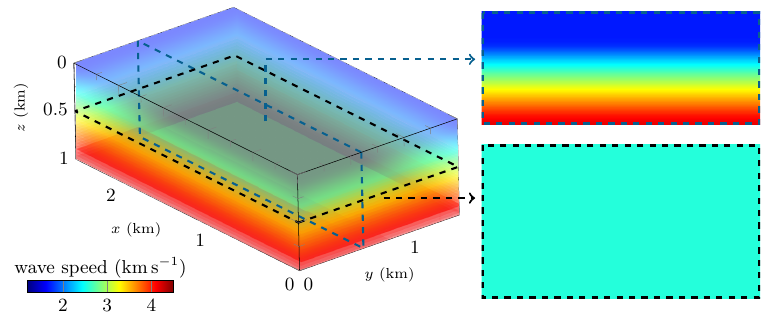}}
   \label{figure:3d:cp-start}}

  \caption{Wave speed and source covariance for the 3D test case 
           of size \num{2.5}$\times$\num{1.5}$\times$\num{1}\si{\kilo\meter\cubed}.
           The density is taken fixed to $\rho=1$. For inversion, the pressure 
           wavefield data are only acquired near surface.}
  \label{figure:3d:cp-true-start}
\end{figure}

The reconstruction is carried out with frequencies from $2$ to $7$\si{\Hz}. 
The initial wave speed model is pictured \cref{figure:3d:cp-start}.
We do not assume any of the inclusion, and start from a one-dimensional 
variation in depth only. This test-case is particularly challenging due 
to the computational cost associated with 3D inversion and the difficulty 
of reconstructing high-contrast structures (initially unknown) from partial 
surface measurements.
The wave speed reconstructions obtained using $\misfitfwi$ (FWI)
and $\misfitcc$ (FCCWI) are shown in 
\cref{figure:3d:reconstruction}.
The reconstruction is performed using frequencies from $2$ to $7$\si{\Hz}. 
The initial wave speed model is shown in \cref{figure:3d:cp-start}. 
No prior information about the inclusions is assumed; instead, the inversion 
starts from a one-dimensional model that varies only with depth.
This test case is particularly challenging due to both the computational 
cost of three-dimensional inversion and the difficulty of reconstructing 
initially unknown high-contrast structures from partial measurements 
acquired at the surface.
The wave-speed reconstructions obtained with $\misfitfwi$ (FWI) 
and $\misfitcc$ (FCCWI) are shown 
in \cref{figure:3d:reconstruction}. 
In the latter case, the source covariance is not inverted and is 
kept fixed at its initial value during the procedure.

\begin{figure}[ht!] \centering
  \subfloat[FWI reconstruction using $\misfitfwi$.]
  {\includegraphics[scale=0.55]{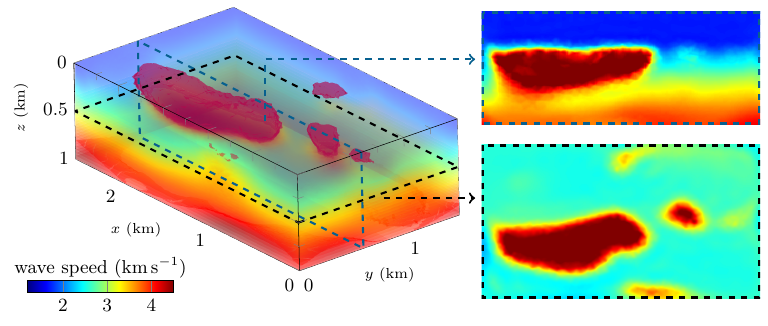}} \hspace*{0.50em}
  \subfloat[FCCWI reconstruction using $\misfitcc$.]
  {\includegraphics[scale=0.55]{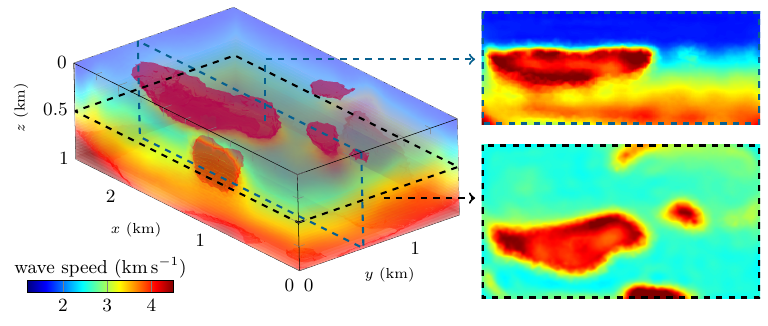}}
  \caption{3D wave speed reconstructions using either the 
           direct wavefield data (FWI) or using cross-correlation
           data (FCCWI). 
           The true and initial models for inversion are shown 
           in \cref{figure:3d:cp-true-start}.}
  \label{figure:3d:reconstruction}
\end{figure}

We observe that the subsurface inclusions are well reconstructed
in both cases. The geometry of the largest inclusion is accurately
recovered, and the smaller inclusions are also clearly visible.
Notably, the inclusions located close to the boundaries,
particularly the one near $y=0$ in the bottom-right slices of
\cref{figure:3d:reconstruction}, are better resolved with
cross-correlation data than with the direct wavefield. On the
other hand, the wave-speed amplitude within the inclusions is
more accurately recovered by FWI, whereas 
FCCWI slightly underestimates it, this is possibly due to 
the lack of information regarding the source covariance. 
Nevertheless, both approaches yield satisfactory results and 
successfully recover high-contrast inclusions.

\paragraph{Computational cost}
This experiment is carried out on the cluster
PlaFRIM\footnote{Plateforme Fédérative pour la Recherche en Informatique et Mathématiques, \url{https://www.plafrim.fr/}.}
using 48 processors.
The reconstruction employs integer frequencies from 2 to 7~\si{\Hz},
with 30 iterations performed at each frequency, resulting in a total
of 180 iterations. The numerical implementation based on the HDG
method adapts the size of the computational system to the frequency
through $p$-adaptivity, as described in \cite{Faucher2020}.
For this test case, FWI requires approximately 1 minute per iteration,
whereas FCCWI requires approximately 4 minutes
per iteration. This significant increase in computational cost is mainly
due to the larger number of linear systems that must be solved, together
with the additional (full) right-hand sides that need to be assembled and processed.

% --------------------------------------------------------------------------
\section{Conclusion}
\label{section:conclusion}

In this work, we presented a quantitative reconstruction methodology 
for passive imaging based on the expected value of cross-correlations 
of ambient wavefields. Our approach exploits the complete cross-correlated 
wavefield rather than relying solely on travel times or first arrivals.
The expected cross-correlation between two signals is expressed in terms 
of deterministic Green's functions, with the source covariance introduced 
as an additional unknown in the inversion. 
We have formulated the subsequent nonlinear inverse problem 
as a minimization procedure, in the spirit of full waveform 
inversion, however adapted to cross-correlation data. 
To efficiently compute the gradient of the misfit functional, we derived 
the adjoint-state formulation involving two adjoint problems. We further 
detailed the numerical implementation based on the Hybridizable 
Discontinuous Galerkin method, adapting to its first-order formulation 
and static condensation.
The numerical experiments demonstrate the feasibility of the proposed approach. 
Although controlled-source imaging remains more accurate when available, the 
results indicate that passive imaging using the full cross-correlation waveform 
can recover meaningful quantitative information about the medium. 
In contrast, the reconstruction of the source covariance proved more difficult, 
and investigating dedicated linear inversion strategies is part of future research.
We will also consider the correlations between different fields that will lead to 
a matrix-valued source covariance.
Future work will also focus on applying the methodology to realistic applications 
such as helioseismology and seismic Earth monitoring, in order to assess the validity 
of the underlying assumptions and evaluate the performance of the proposed framework 
under realistic configurations.

\section*{Acknowledgments}
  The authors acknowledge funding by the European Union 
  with ERC Project \textsc{Incorwave} -- grant 101116288.
  Views and opinions expressed are however those of the authors 
  Views and opinions expressed are however those of the authors 
  only and do not necessarily reflect those 
  of the European Union or the European Research Council 
  Executive Agency (ERCEA). Neither the European Union nor the
  granting authority can be held responsible for them.

% --------------------------------------

% \appendix
% \input{sections/appendix_attraction}

% --------------------------------------
\bibliographystyle{siam}
\bibliography{bibliography}

% --------------------------------------
\end{document}